\newtheorem{theorem}{Theorem}[section]
\newtheorem{corollary}[theorem]{Corollary}
\newtheorem{proposition}[theorem]{Proposition}
\numberwithin{equation}{section}
\title{The Development of a Wax Layer on the Interior Wall of a Circular Pipe Transporting Heated Oil - The Effects of Temperature Dependent Wax Conductivity}
\author{
S. L. Mason \\ 
\href{mailto:SLM585@student.bham.ac.uk}{\texttt{SLM585@student.bham.ac.uk}} 
\and  
J. C. Meyer \\ 
\href{mailto:J.C.Meyer@bham.ac.uk}{\texttt{J.C.Meyer@bham.ac.uk}} 
\and 
D. J. Needham \\
\href{mailto:D.J.Needham@bham.ac.uk}{\texttt{D.J.Needham@bham.ac.uk}} 
}
\affil{School of Mathematics, University of Birmingham, Birmingham, B15 2TT, UK}
\date{21$^{\text{st}}$ April 2021}
\begin{document}

\maketitle

\begin{abstract}
In this paper we develop and significantly extend the thermal phase change model, introduced in \cite{Needham}, describing the process of paraffinic wax layer formation on the interior wall of a circular pipe transporting heated oil, when subject to external cooling. 
In particular we allow for the natural dependence of the solidifying paraffinic wax conductivity on local temperature. 
We are able to develop a complete theory, and provide efficient numerical computations, for this extended model. 
Comparison with recent experimental observations is made, and this, together with recent reviews of the physical mechanisms associated with wax layer formation, provide significant support for the thermal model considered here.
\\ \\
\textbf{keywords - } heated oil pipeline, wax layers, generalised Stefan problem, quasi-linear parabolic PDE, asymptotic limit
\\ \\
\textbf{MSC2020 - }76T99, 80A22, 80M35, 80M20

\end{abstract}


\section{Introduction}\label{Intro}

The transport of oil in long subsea pipelines occurs in many oil-producing fields.
In a recent paper \cite{Needham}, one of the key challenges arising in subsea development of oil fields, namely the formation of paraffinic wax deposits on the inside of the pipe wall, was considered. 
This wax deposit happens when the temperature of the pipe wall falls below the wax solidification temperature, generally in the range $35-40^\circ C$.
Deposition of wax on the pipe wall has significant operational consequences since the inner diameter of the pipe will decrease thereby reducing the transport capacity of the pipe for a given driving pressure.
A more detailed discussion of this phenomena is given in \cite{Needham}. 
In particular \cite{Needham} was concerned with the introduction of a thermal phase change model (first proposed by Schulkes in \cite{Schulkes}) to capture the fundamental mechanism in the wax layer deposition process accurately. 
The associated mathematical model was both fundamental and analysed in detail in \cite{Needham}. 
The outcomes were encouraging, in that a number of key observations in the wax layer deposition process, which were at odds with the historic material diffusion mechanism, were now fully accounted for in the thermal phase change model. 
This model has had further recent support in the independent works  in \cite{LAAT1} and \cite{M1}\footnote{Additionally see \href{https://www.linkedin.com/company/tupdp/?feedView}{TUPDP}.}.

In \cite{Needham}, a thermal approach towards modelling the deposition of paraffinic wax on the inside of the pipe wall was proposed.
The basic assumption introduced in \cite{Needham} is that the principal mechanism leading to wax deposition is a thermal phase change process.
This led to a simple model based on the fundamental balances of heat conduction from the heated oil, through the evolving solid wax layer and pipe wall, and finally into the cooling fluid surrounding the pipe. 
The mathematical formulation of this model gave rise to a free boundary problem (referred to as [IBVP] in \cite{Needham}) of generalised Stefan type. 
This fundamental problem was analysed in considerable detail in \cite{Needham}. 
A number of key salient features arising from the model were identified (see \cite[section 8, p.119-122]{Needham}) with the intention of qualifying and quantifying the basis of the thermal model with detailed experiments performed by Hoffman \& Amundsen \cite{HA1} and Halstensen et al \cite{HAAH1} amongst others.
The experiments of Hoffman \& Amundsen \cite{HA1} show that with a constant flow rate, the wax layer reaches an equilibrium height after sufficient time with this time decreasing as the oil temperature increases. 
This basic feature is not predicted by the molecular diffusion model that is widely applied in the oil and gas industry \cite{SVSN1} \cite{SKSN2} and a ``shear-stripping" mechanism has to be introduced to match experimental observation with model predictions.
A modelling approach based on the thermal phase change mechanism as outlined in \cite{Needham} shows that some fundamental experimental observations can be explained without the need to include rather speculative physical mechanisms.
Very recently, the extensive review by Mehrotra et al \cite{MEH-SK1} has given a thorough consideration of experimental evidence, which provides significant and substantial support for the thermal phase change mechanism introduced in \cite{Needham}, as the principal and key mechanism in the process of wax deposition on the interior wall of pipes transporting heated-oil. 
Most recently, a further review has been provided by Van der Geest et al \cite{vdG1}, which again supplies detailed and critical experimental support for the thermal phase change mechanism.
In addition to these reviews, a recent thesis has addressed this issue experimentally, Mahir \cite{M1}, and has also provided significant support for the thermal mechanism proposed in \cite{Needham}\footnote{An interesting video of the experimental formation of a wax layer can be found at \href{https://www.tupdp.org/?trk=organization-update_share-update_update-text}{TUPDP}, which also provides qualitative visual support for the wax thermal phase change mechanism.}. 
With the very recent emergence of this significant and compelling support for the basic thermal model introduced in \cite{Needham}, the purpose of the present paper is to investigate how the mathematical model in \cite{Needham} can be further developed to accommodate more detailed features associated with wax phase change. 
Specifically, in this paper, we develop the model in \cite{Needham} to account for the dependence of the thermal conductivity of the paraffinic wax solid on local temperature. 
This effect can be significant in many paraffinic wax materials when in solid state and is therefore an effect which should be investigated in the modelling process. 
With this inclusion in the mathematical model developed in \cite{Needham}, the significant change is that the associated free boundary problem [IBVP] becomes nonlinear.
In particular, the partial differential equation, rather than being the associated linear heat conduction equation, becomes a nonlinear (in fact uniformly quasi-linear) strictly parabolic equation, with, in addition, the mixed Robin-type linear boundary condition on the solid interior pipe wall, and the latent heat boundary condition on the free solid wax layer surface, both adopting a generalised nonlinear form. 

\noindent The principal aim of the current paper is to fully investigate the effects of the inclusion of the temperature dependent solid wax thermal conductivity in the model developed in \cite{Needham}. In Section 2, we review and extend the thermal phase change model and it's mathematical formulation. This is followed in Section 3 by an extensive study of associated qualitative results for [IBVP]. 
These structural results are then complemented by consideration of the nature of the solution to [IBVP] as $t\to 0^+$ and $t\to\infty$. 
In Section 5 we devote attention to developing a complete and tractable theory for [IBVP] when the parameter $\varepsilon$ is small, a case which often pertains in physical applications. 
In Section 6 we consider numerical solutions to [IBVP] for comparison with the theory developed earlier, whilst Section 7 gives a qualitative comparison with experiments presented in \cite{HA1}.
Finally we end with a discussion in Section 8.

\section{The Model}\label{Model}
\noindent 
In this section, following \cite{Needham}, we formulate the thermal phase change model which was discussed in section 1.
The heated oil is in uniform flow through a long, straight section of pipe, with circular cross section, and internal radius $R$.
We first observe that when the wax layer thickness on the interior pipe wall $h$ is very much smaller than the pipe radius $R$ ($h \ll R$), which is often the case in applications, then we can reduce the problem to a planar geometry.
Following \cite{Needham} we restrict attention to a long section of the pipe which is remote from entry and exit effects. 
Under these circumstances, we may consider all properties in the model to be independent of axial distance along the pipe.
Thus all dependent variables in the model are functions of $x$ and $t$ alone, where $x$ measures normal distance from the inner pipe wall towards the pipe axis, and $t$ is time.
The situation is illustrated in Figure \ref{fig:WaxLayerDiagramPipeAxis}.
The pipe is surrounded by an aligned circular coolant jacket.  
The fluid in the coolant jacket is maintained at constant temperature $T_c$, with the coolant jacket having width $d_c$.
The thickness of the pipe wall is $d_p$, with the outer pipe wall located at $x=-d_p$ and the inner pipe wall located at $x=0$.
The temperature within the pipe wall is denoted by $T_p$.
The solid wax layer is initiated, when $t=0$, at the inner pipe wall $x=0$, with its upper surface at $x=h$.
The temperature within the solid wax layer is denoted by $T$, with $T_h$ being the constant temperature of the wax formation, at $x=h$.
The temperature of the oil flowing in the pipe, when $h<x\leq R$, is taken as constant, and represented by $T_o$, an approximation which observations confirm to be reasonable in applications \cite{LAAT1}.
Throughout, we consider the situation when,  
\begin{equation} \label{DNE2.1}
T_c < T_h < T_o.
\end{equation}

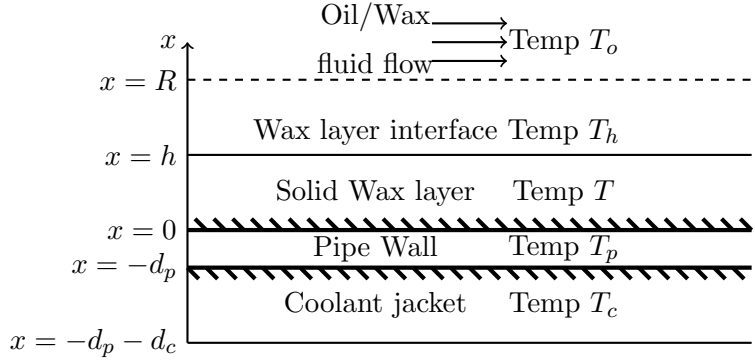
\begin{figure}[]
    \centering
	\begin{tikzpicture}[thick, scale=0.5]
	\draw[-] 	(0,0) -- (15,0);
	\draw[->] 	(0,0) -- (0,8);
	\node[left] at (0,8) {$x$};
	\draw[-] 	(0,5) -- (15,5);
	\draw[-, dashed] 	(0,7) -- (15,7);	
	\node[left] at (0,5) {$x = h$};
	\node[left] at (0,7) {$x = R$};
	\draw[-, ultra thick] 	(0,2) -- (15,2);
	\draw[-, ultra thick] 	(0,3) -- (15,3);
	\foreach \x in {0,...,29} 
	{\draw[-, ultra thick] ( \x * .5  , 2) -- ( \x * .5 + .3, 2 - .3);}
	\foreach \x in {1,...,30} 
	{\draw[-, ultra thick] ( \x * .5  , 3) -- ( \x * .5 - .3, 3 + .3);}
	\node[left] at (0,2) {$x = - d_p$};
	\node[left] at (0,3) {$x = 0$};
	\node[left] at (0,0) {$x = -d_p - d_c$};
	\node at (5,1) {Coolant jacket};
	\node at (10,1) {Temp $T_c$};
	\node at (5,2.5) {Pipe Wall};
	\node at (10,2.5) {Temp $T_p$};
	\node at (5,4) {Solid Wax layer};
	\node at (10,4) {Temp $T$};
	\node[above] at (5,5) {Wax layer interface};
	\node[above] at (10,5) {Temp $T_h$};	
	\node[above] at (5,8) {Oil/Wax};
	\node[below] at (5,8) {fluid flow};
	\node at (10,8) {Temp $T_o$};
	\draw[->] (6.5,8) -- (8.5,8);
	\draw[->] (6.5,8.5) -- (8.5,8.5);
	\draw[->] (6.5,7.5) -- (8.5,7.5);		
\end{tikzpicture}
    \caption{Schematic diagram of the physical problem.}
    \label{fig:WaxLayerDiagramPipeAxis}

\end{figure}
\noindent
We now consider the temperature field in the pipe wall. 
An application of Fourier's law gives
\begin{equation}\label{S2e1}
T_{pt}=\frac{k_p}{c_p\rho_p}T_{pxx}, \quad -d_p < x<0,\ t>0,
\end{equation}
where $k_p$, $c_p$ and $\rho_p$ are the conductivity, specific heat capacity and density of the pipe wall respectively. 
With $t_s$ (as given in \eqref{Scales} and \eqref{DNE2*}) being the time scale associated with wax layer formation, the thickness of the pipe wall is thin, and such that
\begin{equation*}
d_p^2 \ll \frac{\left(k_p t_s \right)}{\left( c_p  \rho_p \right)}.
\end{equation*}
Consequently, the temperature in the pipe wall is in a quasi-steady state, and equation \eqref{S2e1} may be approximated by
\begin{equation}\label{PPDE}
T_{pxx} = 0; \quad -d_p<x<0, \quad t \geq 0,
\end{equation}
subject to the following boundary conditions,
\begin{align}
\label{PBC1}
& k_pT_{px}=\frac{k_c{N\!u}_c}{d_c}(T_p-T_c);&\quad x&=-d_p,\quad t \geq 0, \\
\label{PBC2}
&k_pT_{px}=k_w(T)T_x;&\quad x&=0,\quad t \geq 0, \\
\label{PBC3}
&T_p=T;&\quad x&=0,\quad t \geq 0.
\end{align}
Here $k_c$ is the coolant conductivity, $N\!u_c$ is the Nusselt number for the coolant flow and $k_w(T)$ is the temperature dependent solid wax conductivity. 
Condition \eqref{PBC1} represents continuity of heat flux across the exterior wall of the pipe, which is in contact with the coolant jacket, whilst condition \eqref{PBC2} represents continuity of heat flux from the lower boundary of the wax layer into the interior pipe wall, and condition \eqref{PBC3} represents continuity of temperature at the lower boundary of the wax layer and the inner pipe wall. 
In allowing the solid wax conductivity to be temperature dependent, we write 
$$
k_w(T)={k_w^h}D\left(\frac{T-T_c}{T_h-T_c}\right) , 
$$ 
for $-\infty < T < \infty$, where ${k_w^h}$ is the conductivity of the solid wax at the temperature of wax solidification $T=T_h$, and $D:\mathbb{R} \rightarrow \mathbb{R}$ is the dimensionless solid wax conductivity, representing the variation in solid wax conductivity with, 
$$
u=\frac{\left(T-T_c \right)}{\left(T_h - T_c \right)}.
$$ 
In general $D:\mathbb{R}\to\mathbb{R}$ will be taken as smooth, bounded and bounded above zero. 
Thus, throughout, we will consider $D:\mathbb{R}\to\mathbb{R}$ to satisfy the conditions:

\begin{enumerate}
\item [(D1)] $D(1)=1,$
\item [(D2)] $D_m \leq D(u) \leq D_M$ for all $u\in \mathbb{R}$, with $D_m$ and $D_M$ being positive constants,
\item [(D3)] $D \in C^3 ( \mathbb{R} ).$
\end{enumerate}
Now, the solution to \eqref{PPDE}, \eqref{PBC1} and \eqref{PBC3} is given by,
\begin{equation*}
T_p(x,t)=\frac{k_cN\!u_c\left(T(0,t)-T_c\right)}{\left( k_pd_c+k_cN\!u_cd_p \right)}x+T(0,t);\quad -d_p\leq x \leq 0,\quad t \geq 0, 
\end{equation*}
after which boundary condition \eqref{PBC2} then requires,
\begin{equation*}
    k_w(T)T_x=\frac{k_pk_cN\!u_c}{\left(k_pd_c+k_cN\!u_cd_p \right)}(T-T_c);\quad x=0,\quad t > 0,
\end{equation*}
which is a condition on $T$ at $x=0$, the lower boundary of the solid wax layer at the inner pipe wall.
We are now in a position to consider the temperature field $T$ in the solid wax layer. 
Fourier's law requires that,
\begin{equation}\label{WPDE}
\rho_{w}c_{w}T_{t}=(k_w(T)T_x)_x;\quad 0<x<h(t),\quad t>0,
\end{equation}
subject to the boundary conditions,
\begin{align}
\label{WBC1}
& k_w(T)T_x =\frac{k_pk_cN\!u_c}{\left( k_pd_c+k_cN\!u_cd_p \right)}(T-T_c);& x &=0,\quad t > 0, \\
\label{WBC2}
& T =T_h;& x& =h(t), \quad t>0, \\
\label{WBC3}
& \rho_wH_wh_t  =k_w(T)T_x-\frac{k_oN\!u}{R}(T_o-T_h);& x &=h(t),\quad t>0,
\end{align}
where $\rho_w$, $c_w$ and $H_w$ are the density, specific heat capacity and latent heat of the solid wax respectively, whilst $k_o$ and $N\!u$ are the conductivity of oil and the Nusselt number for the oil flow respectively. 
The conditions \eqref{WBC2} and \eqref{WBC3} express that the outer surface of the solid wax layer must be at the wax solidification temperature, and that the difference in heat \textit{flux} across this interface balances the latent heat required for solid wax formation. 
For convenience we non-dimensionalise the free boundary problem for $T(x,t)$ and $h(t)$ given by \eqref{WPDE} - \eqref{WBC3}. 
We introduce the dimensionless variables,
\begin{equation}\label{timescales}
u=\frac{T-T_c}{T_s},\quad x'=\frac{x}{x_s},\quad t'=\frac{t}{t_s},\quad h'=\frac{h}{x_s}.
\end{equation}
with the scales $T_s$, $x_s$ and $t_s$ chosen as,
\begin{equation}\label{Scales}
T_s=T_h-T_c, \quad x_s=\frac{R{k_w^h}\left(T_h-T_c\right)}{k_oN\!u\left(T_o-T_h\right)},\quad t_s=\frac{R^2\rho_wH_w{k_w^h}\left(T_h-T_c\right)}{{k_o}^2{N\!u}^2\left(T_o-T_h\right)^2}.
\end{equation}
On substituting \eqref{timescales} and \eqref{Scales} into \eqref{WPDE} - \eqref{WBC3} we obtain the non-dimensional form of the free boundary problem as,
\begin{align}
\label{NDPDE}
& {\varepsilon}u_t=(D(u)u_x)_x; && 0<x<h(t), \quad t>0,\\
\label{NDBC1}
& D(u)u_x=ku; && x=0, \quad t>0, \\
\label{NDBC2}
& u=1; && x=h(t), \quad t>0, \\
\label{NDBC3}
& h_t=D(u)u_x-1; && x=h(t), \quad t>0,
\end{align}
where the two dimensionless parameters $\varepsilon$ and $k$ are given by,
\begin{equation}\label{epsilonandk}
\varepsilon=\frac{c_w\left(T_h-T_c\right)}{H_w} \text{,\qquad} k=\frac{k_pk_cN\!u_cR\left(T_h-T_c\right)}{k_oN\!u\left(k_pd_c+k_cN\!u_cd_p\right)\left(T_o-T_h\right)}.
\end{equation}
Here primes have been dropped for ease of notation. 
The parameter $\varepsilon$ measures the ratio of the time scale for heat conduction in the wax layer to the time scale for wax layer growth, whilst the parameter $k$ measures the ratio of heat extracted from the wax layer, by cooling, to the heat the wax layer gains from the oil. 
This is a free boundary problem for $u(x,t)$ with $0 \leq x \leq h(t)$, $t>0$ and $h(t)\geq 0$.
It is worth noting that typical values, estimated in Schulkes \cite{Schulkes} and Kaye and Laby \cite{KL1}, for the scales in (\ref{Scales}), are
\begin{equation} \label{DNE2*}
T_s \sim 10 \degree C, \quad x_s \sim 0.16\text{m} \text{, \quad} t_s \sim 2 \times 10^6\text{s} \sim 20 \text{ days}, 
\end{equation} 
with the dimensionless parameters,
 \begin{equation*}
 \varepsilon \sim O \left(10^{-1}\right) \text{, \quad} k \sim O \left(\frac{2N\!u_c}{1+10^{-3}N\!u_c} \right).
\end{equation*}

\section{The Free Boundary Problem [IBVP]} \label{FBP}
\noindent 
The free boundary problem associated with the mathematical model introduced in section \ref{Model} (\eqref{NDPDE} - \eqref{NDBC3}) may be written fully as,
\begin{align}
\label{2NDPDE}
&{\varepsilon}u_t=(D(u)u_x)_x;&& 0<x<h(t), \quad t>0, \\
\label{2NDBC1}
& D(u)u_x=ku; && x=0, \quad t>0, \\
\label{2NDBC2}
& u=1; && x=h(t), \quad t>0, \\
\label{2NDBC3}
& h_t=D(u)u_x-1; && x=h(t), \quad t>0, \\
\label{2NDBC4}
& u \rightarrow 1 \text{ as } t \rightarrow0^+ \text{ uniformly for }0\leq x \leq h(t), \\
\label{2NDBC5}
& h \rightarrow 0 \text{ as } t \rightarrow 0^+. 
\end{align}
The problem, \eqref{2NDPDE} - \eqref{2NDBC5}, will be referred to as [IBVP]. 
For any $T>0$, the following subsets of ${\mathbb{R}}^2$ are also introduced, namely,
\begin{align*}
&D_T=\{\left(x,t\right)\in {\mathbb{R}}^2:0<x<h(t), 0< t \leq T \},\\
&\partial{D_L^T}=\{\left(x,t\right)\in {\mathbb{R}}^2:x=0, 0< t \leq T \},\\
&\partial{D_R^T}=\{\left(x,t\right)\in {\mathbb{R}}^2:x=h(t), 0< t \leq T \},\\
&\partial{D_T}=\partial{D_L^T} \cup \partial{D_R^T},
\end{align*}
with closures denoted by $\overline{D}_T$, ${\partial{\overline{D}_L^T}}$, ${\partial{\overline{D}_R^T}}$ and ${\partial{\overline{D}_T}}$.
A solution to [IBVP] will be considered as classical, with the following regularity requirements,
\begin{enumerate}[(\roman{enumi})]
\item[(R1)] $h\hspace{-0.5mm}:\hspace{-0.5mm}\left[0,\infty\right)\rightarrow\mathbb{R}$ is continuous and the derivative $h_t$ exists and is continuous on $\left(0,\infty\right)$, with $h$ and $h_t$ non-negative on $(0,\infty).$
\item[(R2)] $u\hspace{-0.5mm}:\hspace{-0.5mm}\overline{D}_{\infty}\rightarrow\mathbb{R}$ is continuous and $u_x,u_t$ both exist and are continuous on $\overline{D}_{\infty}$ and $u_{xx}$ exists and is continuous on ${D_\infty}$.
\end{enumerate}
A reformulation of [IBVP] (with (R1) and (R2)) in terms of coupled integral equations is given by Friedman \cite{Friedman1992} and used by Schatz \cite{Schatz1969} and Cannon and Hill \cite{Cannon1967} to study the regularity of solutions to [IBVP]. 
It is established by Cannon and Hill \cite{Cannon1967} that with $u:\overline{D}_{\infty}\rightarrow\mathbb{R}$ and $h:[0,\infty )\to\mathbb{R}$ being a solution to [IBVP], (with (R1) and (R2)) then, in fact, 
\begin{equation}\label{uinC}
u \in C^{3} (\overline{D}_{\infty}) \quad \text{ and }\quad h \in C^{1}([0, \infty )),
\end{equation}
which requires (D3), in particular.
A consequence of (R1) and \eqref{uinC} is also, 
\begin{equation} \label{4.*}
h(t) \geq 0 \quad \text{ and} \quad h_t(t) \geq 0 \quad \forall \  t\in [0,\infty).
\end{equation}
Before proceeding to analyse [IBVP] further, we first consider steady state solutions associated with [IBVP].

\subsection{Steady State Solutions to [IBVP]} \label{steadystate}
\noindent
A steady state solution to [IBVP] is a solution to [IBVP] which is independent of $t$. 
Thus, $u_s: [0,h_s]\to\mathbb{R}$ and $h_s>0$ is a steady solution to [IBVP] whenever $u_s\in C^1([0,h_s])\cap C^2 ((0,h_s)),$ and satisfies the boundary value problem,
\begin{align}
\label{SSPDE}
& (D(u_s)u_{sx})_x=0; && 0<x<h_s, \\
\label{SSBC1}
& D(u_s)u_{sx}=ku_s; && x=0, \\
\label{SSBC2}
& u_s=1; && x=h_s, \\
\label{SSBC3}
& D(u_s)u_{sx}=1; && x=h_s. 
\end{align}
It is now convenient to introduce $F:\mathbb{R}\to\mathbb{R}$ given by,
\begin{equation}\label{eq:s4definitionF}
F(X)=\int_{0}^{X}D(\lambda)d\lambda \quad \forall \  X\in\mathbb{R},
\end{equation}
where $F(0)=0$ and,
\begin{equation}\label{eq:s4definitionF2}
F'(X)=D(X)>0 \quad \forall \  X\in\mathbb{R}.
\end{equation}
Observe, via (D2) and (D3), that $F(X)$ is strictly increasing with $X$, and $F \in C^4(\mathbb{R})$.
Therefore the inverse $F^{-1}:\mathbb{R}\to\mathbb{R}$ exists, with $F^{-1}\in C^4 (\mathbb{R})$, via \eqref{eq:s4definitionF2}. 
The boundary value problem \eqref{SSPDE} - \eqref{SSBC3} can now be written as,
\begin{align}
\label{2SSPDE}
& \left(F(u_s)\right)_{xx}=0; && 0<x<h_s, \\
\label{2SSBC1}
& (F(u_s))_x=ku_s; && x=0, \\
\label{2SSBC2}
& u_s=1; && x=h_s, \\
\label{2SSBC3}
& (F(u_s))_x=1; && x=h_s. 
\end{align}
An integration of \eqref{2SSPDE} gives, 
\begin{equation}\label{eq6}
F(u_s(x))=Ax+B; \quad 0\leq x\leq h_s,
\end{equation}
where $A,B\in\mathbb{R}$ are constants. 
Applying \eqref{2SSBC1} and \eqref{2SSBC3} we obtain, 
\begin{equation} \label{DNE4.18}
A=1 \quad \text{ and } \quad B=F\left(k^{-1}\right) , 
\end{equation}
after which, \eqref{eq6} becomes,
\begin{equation}\label{S4:eq2}
F(u_s(x))=x+F\left( k^{-1} \right);\quad 0\leq x\leq h_s.
\end{equation}
Finally, applying \eqref{2SSBC2} and rearranging, we require,
\begin{equation}\label{S4:eq1}
h_s=F(1)-F\left( k^{-1} \right) .
\end{equation}
In a steady state we must have $h_s>0$. 
Therefore, it follows from \eqref{S4:eq2} and \eqref{S4:eq1} that we have established:

\begin{proposition} \label{DNEp1}
A steady state solution to [IBVP] exists, and is unique, if and only if, $k>1$.
The steady state solution is given by \eqref{S4:eq2} with \eqref{S4:eq1}.
\end{proposition}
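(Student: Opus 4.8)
The plan is to reduce the steady-state boundary value problem \eqref{SSPDE}--\eqref{SSBC3} to an explicitly solvable first-order problem via the change of variable $v = F(u_s)$, with $F$ the strictly increasing $C^4$ antiderivative of $D$ introduced in \eqref{eq:s4definitionF}. Since $F' = D > 0$ on $\mathbb{R}$ by (D2), the map $u_s \mapsto F(u_s)$ is a $C^4$-diffeomorphism of $\mathbb{R}$ onto its image, so no information is lost: a pair $(u_s, h_s)$ with the required regularity $u_s \in C^1([0,h_s]) \cap C^2((0,h_s))$ solves \eqref{SSPDE}--\eqref{SSBC3} if and only if $v = F(u_s)$ (equivalently $u_s = F^{-1}(v)$) solves the transformed problem, and this reduces the quasi-linear equation to the linear one $v_{xx} = 0$, as already recorded in \eqref{2SSPDE}.

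First I would note that under this substitution the flux terms transform as $D(u_s)u_{sx} = F'(u_s)u_{sx} = (F(u_s))_x = v_x$, so the Robin condition \eqref{SSBC1} becomes $v_x(0) = k\,u_s(0) = k\,F^{-1}(v(0))$, while the two conditions at $x = h_s$ become $v_x(h_s) = 1$ and $v(h_s) = F(1)$. Integrating $v_{xx} = 0$ twice gives $v(x) = Ax + B$ as in \eqref{eq6}; then $v_x \equiv A$ together with $v_x(h_s) = 1$ forces $A = 1$, and feeding $A=1$ into the transformed Robin condition gives $1 = k\,F^{-1}(B)$, hence $B = F(k^{-1})$ — this step uses $k \neq 0$, which is legitimate since $k > 0$ follows from \eqref{epsilonandk} and \eqref{DNE2.1}. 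This reproduces \eqref{DNE4.18}, yields \eqref{S4:eq2}, and finally $v(h_s) = F(1)$ gives $h_s = F(1) - F(k^{-1})$, which is \eqref{S4:eq1}. Uniqueness is then immediate, since at each stage the constants $A$, $B$ and the value $h_s$ were forced with no remaining freedom.

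Next I would settle the existence dichotomy. A steady state requires $h_s > 0$ by definition, so by \eqref{S4:eq1} we need $F(1) - F(k^{-1}) > 0$, i.e. $F(1) > F(k^{-1})$; since $F$ is strictly increasing this is equivalent to $1 > k^{-1}$, and as $k > 0$ this is equivalent to $k > 1$. Conversely, when $k > 1$ one checks that $u_s(x) := F^{-1}\!\left(x + F(k^{-1})\right)$ on $[0, h_s]$ with $h_s := F(1) - F(k^{-1}) > 0$ is a genuine solution: because $F^{-1} \in C^4(\mathbb{R})$ we have $u_s \in C^4([0,h_s])$, in particular the required $C^1 \cap C^2$ regularity holds, and \eqref{SSPDE}--\eqref{SSBC3} are satisfied directly by differentiating this formula and using $F'=D$.

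I do not expect a serious obstacle here, since the computation is essentially the one carried out in the lead-up to the statement; the only points requiring care are bookkeeping ones — recording at the outset that $k > 0$ (so that dividing by $k$ in the Robin condition is valid and so that $1 > k^{-1} \Leftrightarrow k > 1$), ensuring the equivalence between the original and $F$-transformed problems is stated as an \emph{iff} so that both directions of "exists and is unique $\Leftrightarrow k>1$" are covered, and verifying that the explicit candidate genuinely satisfies every boundary condition rather than merely being consistent with the once-integrated equation.
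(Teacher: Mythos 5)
Your proposal is correct and follows essentially the same route as the paper: the paper likewise reduces \eqref{SSPDE}--\eqref{SSBC3} to $(F(u_s))_{xx}=0$ via the substitution $F(u_s)$, integrates to get $F(u_s(x))=Ax+B$, fixes $A=1$, $B=F(k^{-1})$ from the flux conditions, and reads off $h_s=F(1)-F(k^{-1})$, so that $h_s>0$ forces $k>1$. Your added remarks (explicitly noting $k>0$, phrasing the transformation as an equivalence, and verifying the candidate) are just careful bookkeeping of the same argument.
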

\noindent
It is worth noting that \eqref{S4:eq1} may be written as,
\begin{equation}\label{S4:eq3}
h_s=\left(1-k^{-1} \right)\left< D\left(\left[ k^{-1},1\right]\right) \right>,
\end{equation}
where
\begin{equation}\label{S4:eq3.5}
\left< D\left(\left[ k^{-1} ,1\right]\right) \right>=\left(1- k^{-1} \right)^{-1}\int_{ k^{-1}}^{1}D(\lambda)d\lambda,
\end{equation}
is the mean value of $D(\lambda)$ over the interval $\lambda \in \left[k^{-1} ,1\right]$.
Also, we may rewrite \eqref{S4:eq2} explicitly as 
\begin{equation}\label{S4:eq4}
u_s(x)=F^{-1}\left(x+F\left( k^{-1} \right)\right); \quad 0 \leq x \leq h_s.
\end{equation}
An examination of $h_s=h_s(k)$ in \eqref{S4:eq3} establishes the following properties, 
\renewcommand{\labelenumi}{\roman{enumi}}
 \begin{enumerate}[(\roman{enumi})]
   \item $h_s \in C^4([1,\infty))$ and is strictly monotone increasing,
   \item $h_s'(k)=D\left( k^{-1} \right) k^{-2} \quad \forall \  k \in [1,\infty),$   
   \item $h_s(k)=\left(k-1\right) + O\left(\left(k-1\right)^2\right)$ as $k \rightarrow 1^+,$
   \item $h_s(k)=\left<D([0,1])\right> - D(0) k^{-1} + O\left( k^{-2}\right)$ as $k\rightarrow\infty.$\\
 \end{enumerate}
We give a qualitative sketch of $h_s(k)$ against $k$ in Figure \ref{fig:sketch for h_s}. 
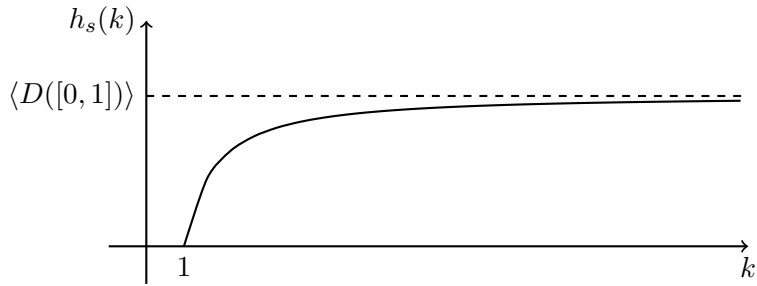
\begin{figure}[h!]
    \centering
    \centering
	\begin{tikzpicture}[thick, scale=0.5]
	\draw[->] (-1,0) -- (16,0);
	\draw[->] 	(0,-1) -- (0,6);
	\node[left] at (0,6) {$h_s(k)$};
	\node[below] at (16,0) {$k$}; 
	\draw[scale=1, domain=1:15.8, smooth, variable=\x, black] plot ({\x},{4 - 4/(\x^(5/4))}); 
	\draw[-, dashed] 	(0,4) -- (16,4);	
	\node[left] at (0,4) {$\langle D([0,1]) \rangle$};
	\node[below] at (1,0) {$1$}; 
\end{tikzpicture}
    \caption{Sketch of $h_s(k)$ against $k$.}
    \label{fig:sketch for h_s}
\end{figure}
Next, an examination of \eqref{S4:eq4} establishes the following properties of $u_s:[0, h_s]\rightarrow \mathbb{R},$ 
\renewcommand{\labelenumi}{\roman{enumi}}
 \begin{enumerate}[(\roman{enumi})]
 \item $u_s \in C^4(\left[0,h_s\right])$ and is monotone increasing,
 \item $u_s(0)= k^{-1}$ and $u_s(h_s)=1,$
 \item $u_s'(x)=\dfrac{1}{D(u_s(x))}>0 \quad \forall \  x \in [0,h_s],$
 \item $u_s''(x)=-\dfrac{D' (u_s(x))}{{D(u_s(x))}^3} \quad \forall \  x \in [0,h_s]$.
\end{enumerate}
It is worth noting from (iv) that inflection points will occur in the graph of $u_s(x)$ for $x\in [0,h_s]$ if and only if there exists values $u_i \in \left[ k^{-1} ,1\right]$ such that $D'(u_i)=0$. 
For illustrative purposes, steady state solutions associated with those specific solid wax conductivities $D:\mathbb{R} \to \mathbb{R}$ detailed in Section \ref{numerics} are given in Figures \ref{fig:3}. 
We note that for $k=2$ the steady state solutions have no inflection points, whereas for $k>2$ the steady state solutions (except when $c=0$) have a single inflection point. 
We now return to [IBVP].
\begin{figure}[h!]
\begin{subfigure}{.5\textwidth}
  \centering
  \includegraphics[width=\linewidth]{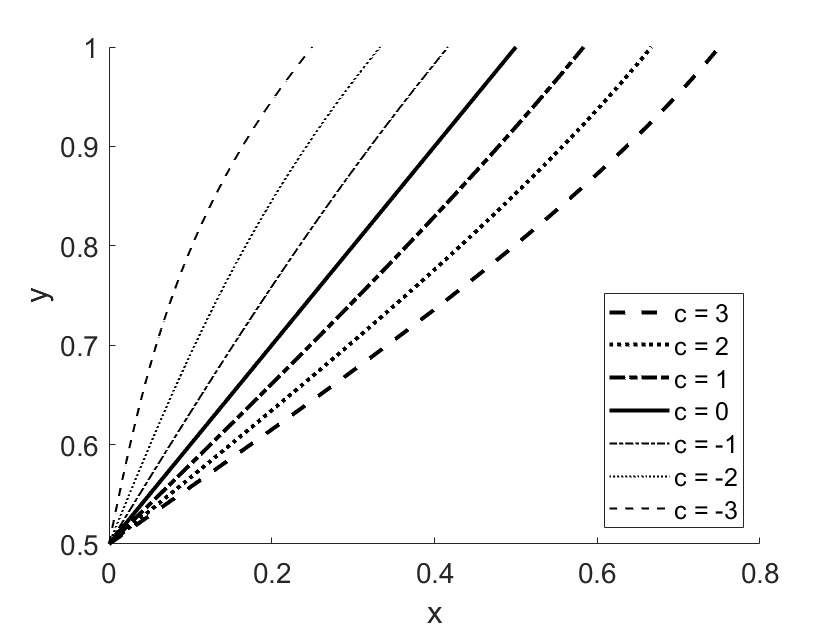}
  \caption{}
  \label{fig:3a}
\end{subfigure}
\begin{subfigure}{.5\textwidth}
  \centering
  \includegraphics[width=\linewidth]{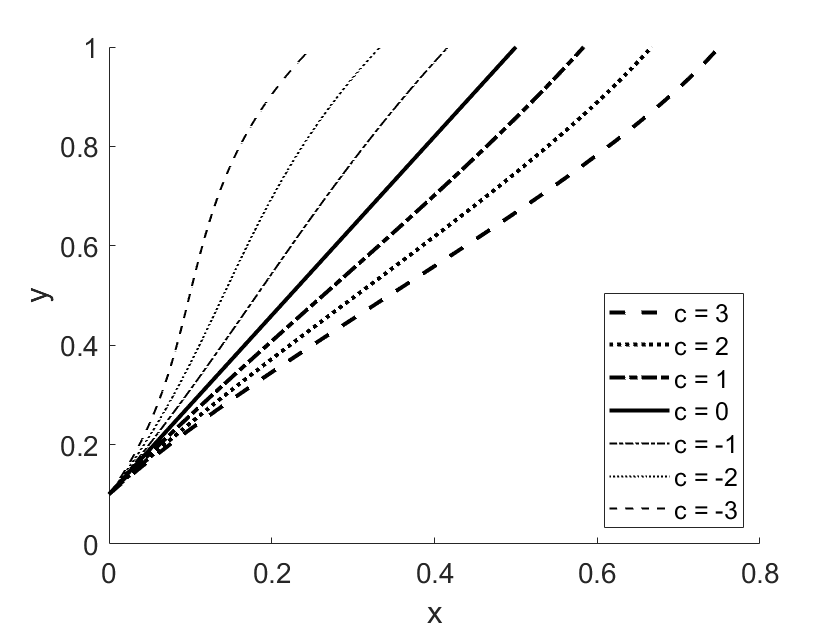}
  \caption{}
  \label{fig:3b}
\end{subfigure}
\caption{
Plots of $y=u_s(x)$ for $D(u)=1+cu(1-u)$ with $k=2$ and $10$ in (a) and (b) respectively. 
}
\label{fig:3}
\end{figure}

\subsection{Qualitative Theory for [IBVP]}
\noindent
In this subsection we determine the principal structure and qualitative properties of the solution to the free boundary problem [IBVP]. 
To begin with, we have, 
\begin{proposition}\label{prop1}
Let  $u:\overline{D}_{\infty}\rightarrow\mathbb{R}$ be a solution to [IBVP]. 
Then
$$
0 \leq u(x,t) \leq 1 \quad \forall \  \left(x,t\right)\in \overline{D}_{\infty}.
$$
\end{proposition}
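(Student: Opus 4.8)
The plan is to obtain both bounds from the (weak) parabolic maximum principle, using the geometry of $D_T$ and the sign structure of the Robin condition \eqref{2NDBC1}. Fix an arbitrary $T>0$ and work on the compact set $\overline{D}_T$. First I would rewrite the quasi-linear equation \eqref{2NDPDE} in non-divergence, coefficient-frozen form
\begin{equation*}
\varepsilon u_t - a(x,t)\,u_{xx} - b(x,t)\,u_x = 0 \qquad \text{on } D_T,
\end{equation*}
with $a(x,t)=D(u(x,t))$ and $b(x,t)=D'(u(x,t))\,u_x(x,t)$. By (D2), (D3) and (R2), $a$ and $b$ are continuous, hence bounded, on $\overline{D}_T$, and $a\ge D_m>0$; thus this is a uniformly parabolic linear operator with no zeroth-order term, so the classical weak maximum and minimum principles apply, and $u$ attains its maximum and minimum over $\overline{D}_T$ on the parabolic boundary $\partial_p D_T$.

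The second step is to identify $\partial_p D_T$. Since $h$ is continuous and non-decreasing (see (R1) and \eqref{4.*}), the moving lateral boundary $\{(h(t),t):0<t\le T\}$ is regular and belongs to $\partial_p D_T$, as does the fixed wall $\{(0,t):0<t\le T\}$; because $h(t)\to0$ as $t\to0^+$, the ``initial'' part of $\partial_p D_T$ collapses to the single corner point $(0,0)$. On the lateral boundary $u=1$ by \eqref{2NDBC2}, and at $(0,0)$ one has $u=1$ by the continuity in (R2) together with \eqref{2NDBC4}. Hence the only part of $\partial_p D_T$ on which $u$ is not already known to equal $1$ is the fixed wall $x=0$, $t>0$.

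For the upper bound, suppose for contradiction that $M:=\max_{\overline{D}_T}u>1$. By the above, $M$ is attained on $\partial_p D_T$, and since $u=1$ on the remaining pieces it must be attained at some $(0,t_0)$ with $t_0\in(0,T]$, so that $u(0,t_0)=M$ is a global maximum of $u$ on $\overline{D}_T$. Then $u(x,t_0)\le M=u(0,t_0)$ for all $x\in[0,h(t_0)]$, which forces the one-sided derivative $u_x(0,t_0)\le0$. But evaluating \eqref{2NDBC1} at $(0,t_0)$ gives $D(u(0,t_0))\,u_x(0,t_0)=k\,u(0,t_0)$, and since $k>0$ (immediate from \eqref{epsilonandk} and \eqref{DNE2.1}), $u(0,t_0)=M>1>0$, and $D>0$ by (D2), we obtain $u_x(0,t_0)>0$ — a contradiction. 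Hence $u\le1$ on $\overline{D}_T$. The lower bound is symmetric: if $m:=\min_{\overline{D}_T}u<0$, it is attained at some $(0,t_0)$, $t_0>0$, and being a global minimum it forces $u_x(0,t_0)\ge0$, whereas \eqref{2NDBC1} gives $D(u(0,t_0))\,u_x(0,t_0)=k\,u(0,t_0)=km<0$, hence $u_x(0,t_0)<0$, again a contradiction. Thus $0\le u\le1$ on $\overline{D}_T$, and since $T>0$ was arbitrary the bound holds on $\overline{D}_\infty$.

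The only real obstacle is in the first two steps: justifying the weak maximum principle on the non-cylindrical, pinching domain $D_T$ and correctly pinning down its parabolic boundary — in particular that the monotone free boundary $x=h(t)$ is regular and that the initial data degenerates to the single point $(0,0)$ where $u=1$. Once that is in place, the contradiction at the Robin wall is elementary and requires no Hopf lemma, although one could equally invoke the parabolic boundary-point lemma there, since $x=0$ is a flat boundary for which the interior-ball condition is trivial.
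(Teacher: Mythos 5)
Your proposal is correct and follows essentially the same route as the paper's proof: apply the weak parabolic maximum/minimum principle on $\overline{D}_T$ to push any extremum violating $0\le u\le 1$ onto the parabolic boundary, note that $u=1$ on the free boundary $x=h(t)$, and then rule out the wall $x=0$ via the sign of the Robin condition \eqref{2NDBC1} against the one-sided derivative at an extremum. Your explicit treatment of the corner $(0,0)$, where the initial segment degenerates and $u=1$ by \eqref{2NDBC4} and continuity, is a small extra care the paper leaves implicit, but it does not change the argument.
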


\begin{proof}
Consider $u:\overline{D}_{\infty}\rightarrow\mathbb{R}$ on the compact region $\overline{D}_T$ (for any $T>0$). 
Then $u$ is continuous on $\overline{D}_T$ since $u$ is continuous on $\overline{D}_{\infty}$ (via (R2)). 
Also, we have that $u_x$, $u_t$ and $u_{xx}$ all exist and are continuous on $D_{\infty}$ and so are continuous on $D_T$ (via (R2)). 
Also, we have from \eqref{2NDPDE} that
\begin{equation*}\label{2NDPDE1}
{\varepsilon}u_t=(D(u)u_x)_x \quad \text{ on } D_{\infty},
\end{equation*}
and so,
\begin{equation}\label{s4.2:1}
u_t-\varepsilon^{-1}D'(u){u_x}^2-\varepsilon^{-1}D(u)u_{xx}=0\quad \text{ on } D_{\infty}.
\end{equation}
Now for each $\left(x,t\right)\in D_T$, set,
\begin{equation} \label{JME2}
a(x,t)=\varepsilon^{-1}D(u)\geq \varepsilon^{-1}D_m>0,
\end{equation}
via (D2) and,
\begin{equation}\label{s4.2:2}
b(x,t)=\varepsilon^{-1}D'(u)u_x.
\end{equation}
Now suppose that $u$ is not non-negative on $\overline{D}_T$. 
Then using \eqref{s4.2:1}-\eqref{s4.2:2}, we may apply the Weak Parabolic Minimum Principle \cite{WW} to conclude that there exists a point $\left(x^*,t^*\right)\in {{\partial{\overline{D}_T}}}$ such that, 
\begin{equation}\label{eq8}
u(x^*,t^*)= \inf_{(x,t)\in\overline{D}_T} u(x,t) <0.
\end{equation}
Since $\left(x^*,t^*\right)\in {{\partial{\overline{D}_T}}}={{\partial{\overline{D}^T_L}}} \cup {{\partial{\overline{D}^T_R}}}$ we must have $\left(x^*,t^*\right)=\left(0,t^*\right)\in {{\partial{\overline{D}^T_L}}}$ otherwise $u(x^*,t^*)=1$. Thus, at $\left(0,t^*\right)$ we have, 
\begin{equation*}
D(u(0,t^*))u_x(0,t^*)=ku(0,t^*)<0,
\end{equation*} 
via condition \eqref{2NDBC1} and \eqref{eq8}. 
Then, via (D2), we conclude that, 
\begin{equation}\label{eq9}
u_x(0,t^*)<0.
\end{equation}
However, since $u$ achieves its infimum at $\left(0,t^*\right)$ then $u_x(0,t^*) \geq 0,$ which contradicts \eqref{eq9}. 
Thus, we conclude that $u$ must be non-negative on $\overline{D}_T$. 
This holds for any $T>0$ and so $u(x,t) \geq 0$ for all $\left(x,t\right)\in \overline{D}_{\infty}$.
Next it follows from \eqref{2NDBC2} that,
\begin{equation}\label{eq10}
\underset{\left(x,t\right)\in\overline{D}_T}{\sup}\hspace{-2mm}u\left(x,t\right) \geq 1.
\end{equation}
However, via \eqref{s4.2:1}-\eqref{s4.2:2}, it follows from the Weak Parabolic Maximum Principle \cite{WW} that there exists a point $\left(x^*,t^*\right) \in {\partial{\overline{D}_T}}$ such that,
\begin{equation}\label{eq11}
u(x^*,t^*)= \sup_{(x,t)\in\overline{D}_T}u(x,t) \geq 1,
\end{equation}
via \eqref{eq10}. 
Now suppose that $\left(x^*,t^*\right) \in {\partial{\overline{D}^T_L}},$ so that $\left(x^*,t^*\right)=\left(0,t^*\right)$, and from \eqref{2NDBC1} and \eqref{eq11} we have that, 
\begin{equation*}
D(u(0,t^*))u_x(0,t^*)=ku(0,t^*) \geq k>0,
\end{equation*} 
and so,
\begin{equation}\label{eq12}
u_x(0,t^*)>0,
\end{equation}
via (D2). 
However, since $u$ achieves its supremum at $\left(0,t^*\right)$ then $u_x(0,t^*) \leq 0$, which contradicts \eqref{eq12}. 
Hence we conclude that $\left(x^*,t^*\right) \in {\partial{\overline{D}^T_R}}$, and so, via \eqref{2NDBC2}, 
\begin{equation*}
u(x^*,t^*)= \sup_{(x,t)\in\bar{D}_T} u(x,t) =1 .
\end{equation*}
Therefore, $u \leq 1$ on $\overline{D}_T$. 
Since this holds for any $T>0$, we conclude that $u(x,t) \leq 1$ for all $\left(x,t\right) \in \overline{D}_{\infty}$. 
Consequently, we have, 
\begin{equation*}
0 \leq u(x,t) \leq 1 \quad \forall \  \left(x,t\right)\in \overline{D}_{\infty},
\end{equation*}
as required. 
\end{proof}
\noindent
We next refine these inequalities in,
\begin{corollary}\label{coro2}
Let  $u:\overline{D}_{\infty}\rightarrow\mathbb{R}$ be a solution to [IBVP]. 
Then, 
$$
0 < u(x,t) < 1 \quad \forall \ \left(x,t\right)\in {D_\infty}.
$$
\end{corollary}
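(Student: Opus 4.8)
The plan is to upgrade the weak bounds of Proposition \ref{prop1} to strict ones by invoking the \emph{strong} parabolic maximum and minimum principles, applied to exactly the linear uniformly parabolic equation already isolated in the proof of Proposition \ref{prop1}: on $D_\infty$ the solution satisfies $u_t - a(x,t)u_{xx} - b(x,t)u_x = 0$ with $a=\varepsilon^{-1}D(u)$ and $b=\varepsilon^{-1}D'(u)u_x$. First I would check that, on any compact $\overline{D}_T$, this really is a linear uniformly parabolic operator with bounded continuous coefficients: by \eqref{uinC} we have $u\in C^3(\overline{D}_\infty)$, so $u_x$ is bounded on $\overline{D}_T$, while $0\le u\le 1$ together with (D2), (D3) gives $\varepsilon^{-1}D_m \le a \le \varepsilon^{-1}D_M$ and $b$ bounded and continuous. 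Hence the strong parabolic maximum/minimum principle \cite{WW} is available on $\overline{D}_T$ for every $T>0$.

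For the lower bound, suppose toward a contradiction that $u(x_0,t_0)=0$ for some $(x_0,t_0)\in D_\infty$, so in particular $t_0>0$ and $h(t_0)>0$. Since $u\ge 0$ on $\overline{D}_\infty$ by Proposition \ref{prop1}, this value is the minimum of $u$ over $\overline{D}_{t_0}$, attained at an interior point of $D_\infty$. The strong minimum principle then forces $u$ to be constant, hence identically $0$, along the whole horizontal slice $\{(x,t_0):0\le x\le h(t_0)\}$ (indeed on all of $\overline{D}_{t_0}$). But \eqref{2NDBC2} gives $u(h(t_0),t_0)=1\neq 0$, a contradiction; hence $u>0$ throughout $D_\infty$.

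For the upper bound, suppose $u(x_0,t_0)=1$ for some $(x_0,t_0)\in D_\infty$. Since $u\le 1$ by Proposition \ref{prop1}, this is an interior maximum, so the strong maximum principle gives $u\equiv 1$ on the slice $\{(x,t_0):0\le x\le h(t_0)\}$, whence $u_x(0,t_0)=0$ (the relevant one-sided derivative, continuous by (R2)). Evaluating the Robin condition \eqref{2NDBC1} at $(0,t_0)$, which is legitimate since $t_0>0$, yields $D(1)\cdot 0 = k\cdot 1$, i.e. $k=0$, contradicting $k>0$ (all factors in \eqref{epsilonandk} are positive). Combining the two parts gives $0<u<1$ on $D_\infty$.

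The main point to be careful about is the precise form of the strong principle on $D_T$, whose lateral boundary $x=h(t)$ is merely a $C^1$ curve: one needs the version which, from an interior extremum at $(x_0,t_0)$, propagates the extremal value along the entire time-slice $t=t_0$ (and backward in time) within the connected component — which is all the argument above actually uses — rather than any statement about the boundary curve itself. The remaining ingredient, that the quasi-linear equation \eqref{2NDPDE} may be frozen into a linear uniformly parabolic equation with the stated coefficient bounds, has essentially been verified already in the proof of Proposition \ref{prop1}, so this step should be routine.
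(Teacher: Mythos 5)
Your proposal is correct and follows essentially the same route as the paper: starting from the weak bounds of Proposition \ref{prop1}, an interior zero (resp.\ interior value $1$) is propagated by the strong parabolic minimum (resp.\ maximum) principle and then contradicted against the boundary condition \eqref{2NDBC2} (resp.\ the Robin condition \eqref{2NDBC1} with $k>0$). The only differences are cosmetic (the paper phrases the upper-bound contradiction as $u_x(0,t)>0$ versus $u_x(0,t)=0$ rather than $k=0$ versus $k>0$), so no further comment is needed.
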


\begin{proof}
From Proposition \ref{prop1} we have, 
\begin{equation*}
0 \leq u(x,t) \leq 1 \quad \forall \ \left(x,t\right)\in \overline{D}_{\infty}.
\end{equation*}
Now suppose there exists a point $\left(x^*,t^*\right) \in D_{\infty}$ such that $u(x^*,t^*)=0$.
Then, 
\begin{equation*}
\inf_{(x,t)\in\overline{D}_\infty}u(x,t)=0
\end{equation*}
and it follows from the Strong Parabolic Minimum Principle \cite{WW} that, 
\begin{equation*}
u(x,t)=0 \quad \forall \ \left(x,t\right) \in \overline{D}_{T^*},
\end{equation*}
where 
$$
T^*=\sup\{t\in \left(0,T\right]:\exists\ x\in\left(0,h(t)\right) \text{ with } u(x,t)=0\} \geq t^* >0.
$$
This contradicts condition \eqref{2NDBC2}. 
Thus, $u(x,t)>0$ for all $\left(x,t\right) \in D_{\infty}$. 
Next suppose that there exists $\left(x^*,t^*\right) \in D_{\infty}$ such that $u(x^*,t^*)=1$.
Then,   
\begin{equation}\label{prop2:eq1}
\underset{\left(x,t\right)\in \overline{D}_{\infty}}{\sup}\hspace{-2mm}u(x,t) = 1,
\end{equation}
and it follows from the Strong Parabolic Maximum Principle \cite{WW} that, 
\begin{equation*}
u(x,t)=1 \quad \forall \  \left(x,t\right) \in \overline{D}_{T^*},
\end{equation*}
where now,
$$
T^*=\sup\{t\in \left(0,T\right]:\exists\ x\in\left(0,h(t)\right)\text{ with } u(x,t)=1\} \geq t^* >0.
$$
Also, via condition \eqref{2NDBC1} with regularity condition (R2), we have from \eqref{prop2:eq1}, 
\begin{equation*}
D(u(0,t))u_x(0,t)=ku(0,t) = k>0 \quad\forall \  t \in \left[0,T^*\right],
\end{equation*}
and so, via (D2),
\begin{equation}\label{eq13}
u_x(0,t)>0 \quad \forall \   t \in \left[0,T^*\right].
\end{equation}
However, since $u(x,t)=1$ for all $(x,t)\in \overline{D}_{T^*}$, then $u_x(0,t)=0$ for all $t \in \left[0,T^*\right]$, which contradicts \eqref{eq13}. 
Thus $u(x,t)<1$ for all $(x,t) \in D_{\infty}$.
Therefore,
\begin{equation*}
0 < u(x,t) < 1 \quad \forall \ \left(x,t\right)\in {D_\infty},
\end{equation*}
as required. 
\end{proof}

\noindent
Next we have,
\begin{proposition} \label{prop3}
Let $u:\overline{D}_{\infty}\rightarrow\mathbb{R}$ be a solution to [IBVP]. 
Then,
\begin{equation*}
u_t(x,t) \leq 0 \quad \forall \ \left(x,t\right)\in \overline{D}_{\infty}.
\end{equation*}
\end{proposition}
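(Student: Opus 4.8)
The plan is to prove the stronger statement that $t\mapsto u(x,t)$ is non-increasing, and to deduce $u_t\le 0$ by a limiting argument. Fix $T>0$ and $\tau>0$. Since $h$ is non-decreasing on $[0,\infty)$ (by \eqref{4.*}), the shifted function $\tilde u(x,t):=u(x,t+\tau)$ is well defined on $\overline{D}_T$, lies in $C^3(\overline{D}_T)$ by \eqref{uinC}, and --- because $0<x<h(t)\le h(t+\tau)$ on $D_T$, with $t+\tau>0$ --- satisfies \eqref{2NDPDE} on $D_T$ and \eqref{2NDBC1} on $\partial D_L^T$, just as $u$ does. The key device is to work not with $\phi:=\tilde u-u$ directly but with the Kirchhoff potential $\psi:=F(\tilde u)-F(u)$, where $F$ is as in \eqref{eq:s4definitionF}. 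Writing $\psi=a\phi$ with $a(x,t)=\int_0^1 D\big((1-s)u+s\tilde u\big)\,ds$, property (D2) gives $0<D_m\le a\le D_M$ on $\overline{D}_T$, while $u_t,\tilde u_t$ are continuous and hence bounded on the compact set $\overline{D}_T$ by (R2), so $a$ and $a_t$ are bounded on $D_T$; also $\psi$ and $\phi$ always share the same sign.

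Subtracting the two instances of \eqref{2NDPDE} gives $\varepsilon\phi_t=\big(F(\tilde u)-F(u)\big)_{xx}=\psi_{xx}$ on $D_T$, and since $\phi=\psi/a$ the potential $\psi$ satisfies the linear, strictly parabolic equation
\begin{equation*}
\frac{\varepsilon}{a}\,\psi_t-\psi_{xx}-\frac{\varepsilon a_t}{a^2}\,\psi=0\qquad\text{on }D_T ,
\end{equation*}
with bounded coefficients. For the data on the parabolic boundary $\partial\overline{D}_T=\partial\overline{D}_L^T\cup\partial\overline{D}_R^T$ I would record three facts. On $\partial D_R^T$, $\phi(h(t),t)=u(h(t),t+\tau)-1\le 0$ by Proposition \ref{prop1}, hence $\psi\le 0$ there. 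At the corner $(0,0)$, to which $D_T$ collapses, $\phi(0,0)=u(0,\tau)-\lim_{(x,t)\to(0,0)}u(x,t)=u(0,\tau)-1\le 0$ by \eqref{2NDBC4} and Proposition \ref{prop1}, so $\psi(0,0)\le 0$. On $\partial D_L^T$ (i.e.\ for $t>0$), subtracting the two instances of \eqref{2NDBC1} gives the Robin-type identity $\psi_x=k\phi=(k/a)\psi$, in which the coefficient $k/a$ is \emph{strictly positive}.

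With these in hand I would argue by contradiction. Suppose $M:=\max_{\overline{D}_T}\psi>0$ (attained, by compactness and continuity). After the standard reduction $w:=e^{-\lambda t}\psi$ with $\lambda$ large enough that the equation for $w$ has a non-positive zeroth-order coefficient, the weak parabolic maximum principle \cite{WW} shows that the positive maximum of $w$ over $\overline{D}_T$ is attained on $\partial\overline{D}_T$. Since $w$ inherits $w\le 0$ on $\partial D_R^T$ and at $(0,0)$, the maximum must be attained at some $(0,t^*)$ with $t^*>0$ and $w(0,t^*)>0$. But then the one-sided derivative satisfies $w_x(0,t^*)\le 0$, whereas the Robin identity forces $w_x(0,t^*)=(k/a(0,t^*))\,w(0,t^*)>0$ --- a contradiction. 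Hence $\psi\le 0$, so $\phi\le 0$, on $\overline{D}_T$; that is, $u(x,t+\tau)\le u(x,t)$ for all $(x,t)\in\overline{D}_T$. Since $T,\tau>0$ were arbitrary, $t\mapsto u(x,t)$ is non-increasing; dividing by $\tau$, letting $\tau\to 0^+$, and invoking the continuity of $u_t$ on $\overline{D}_\infty$ from (R2), we obtain $u_t\le 0$ on $\overline{D}_\infty$.

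The step I expect to be the main obstacle is the boundary condition at $x=0$. A direct attack on $v:=u_t$ produces there the condition $D(u)v_x=(k-D'(u)u_x)v$, whose coefficient has indeterminate sign, so the boundary-point part of a maximum-principle argument breaks down. Passing to the Kirchhoff potential $\psi=F(\tilde u)-F(u)$ is precisely what repairs this, converting the boundary relation into $\psi_x=(k/a)\psi$ with $k/a>0$. A related point needing care is that the argument must be run on the pinching domain $\overline{D}_T$ itself --- so that the sign of $\phi$ at the corner $(0,0)$ is supplied by the initial condition \eqref{2NDBC4} --- rather than on a truncated domain $\{t\ge\delta>0\}$, on which controlling the sign of $\phi$ along $\{t=\delta\}$ would already amount to the conclusion being sought.
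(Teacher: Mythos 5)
Your proposal is correct, but it takes a genuinely different route from the paper. The paper differentiates the problem in time: it sets $f=F(u)$ with $F$ as in \eqref{eq:s4definitionF}, works with $w=f_t$, handles the free boundary by the chain rule along $x=h(t)$ together with the Stefan condition \eqref{2NDBC3} and $h_t\ge 0$ from (R1) to obtain $w\le 0$ there (its \eqref{prop3:21}--\eqref{prop3:22}), and then runs a hands-on maximum-principle argument (after an exponential scaling chosen against the zeroth-order coefficient, which involves $f_{xx}$) using the transformed Robin condition $w_x=kw/D(u)$ at $x=0$. You instead prove monotonicity in time by a shift comparison of $\tilde u(\cdot,\cdot)=u(\cdot,\cdot+\tau)$ with $u$ on $\overline D_T$, linearised through the Kirchhoff difference $\psi=F(\tilde u)-F(u)=a\phi$, and recover $u_t\le 0$ in the limit $\tau\to 0^+$ via (R2). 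Both arguments hinge on the same two structural facts — the Kirchhoff variable converts \eqref{2NDBC1} into a Robin condition with strictly positive coefficient (your diagnosis of why a direct attack on $v=u_t$ fails at $x=0$ matches the paper's device exactly), and (R1) supplies the monotonicity of $h$ — and both implicitly use $h(t)>0$ for $t>0$ when making the boundary-point argument at $x=0$, so you are at the same level of rigour as the paper there. What your route buys: you never differentiate the boundary conditions or the free-boundary kinematics, needing on $\partial D_R^T$ only Proposition \ref{prop1} and the monotonicity of $h$, and your zeroth-order coefficient $a_t/a$ is bounded by compactness alone; what the paper's route buys: the auxiliary functions $w$, $v$ and the bound \eqref{prop3:38.5} built inside its proof are reused immediately to obtain the strict inequality of Corollary \ref{coro4}, which your comparison argument does not directly yield and would have to be supplemented by a separate strong-maximum-principle step.
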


\begin{proof}
Firstly, from \eqref{2NDPDE} we have,
\begin{equation}\label{prop3:1}
\varepsilon D(u) u_t = D(u)(D(u) u_x)_x \quad\text{ on } D_{\infty}.
\end{equation}
Next we introduce $f:\overline{D}_{\infty}\rightarrow\mathbb{R}$ defined by,
\begin{equation}\label{prop3:2}
f(x,t)=F(u(x,t)) \quad \forall \ \left(x,t\right)\in \overline{D}_{\infty},
\end{equation}
with $F:\mathbb{R}\to\mathbb{R}$ as introduced in \eqref{eq:s4definitionF}.
It follows from \eqref{eq:s4definitionF}, \eqref{uinC} and (D3) that,
\begin{equation}\label{prop3:3}
f \in C^{3}\hspace{-0.5mm}\left(\overline{D}_{\infty}\right) \hspace{-0.5mm},
\end{equation}
and we have, for $\left(x,t\right) \in \overline{D}_{\infty},$
\begin{align}
\label{prop3:4}
&f_t(x,t)=F'(u(x,t))u_t(x,t)=D(u(x,t))u_t(x,t),\\
\label{prop3:5}
&f_x(x,t)=F'(u(x,t))u_x(x,t)=D(u(x,t))u_x(x,t),\\
\label{prop3:6}
&f_{xx}(x,t)=D'(u(x,t))u_x^2(x,t)+D(u(x,t))u_{xx}(x,t).
\end{align}
Therefore, via \eqref{prop3:4} and \eqref{prop3:5}, we observe that \eqref{prop3:1} becomes,
\begin{equation}\label{prop3:7}
\varepsilon f_t=D(u)f_{xx} \quad \text{ on } D_{\infty}.
\end{equation}
Similarly, \eqref{2NDBC1}-\eqref{2NDBC3} becomes, 
\begin{align}
\label{prop3:8}
& f_x=ku; && x=0, \quad t>0,
\\
\label{prop3:9}
& f=F(1); && x=h(t), \quad t>0, 
\\
\label{prop3:10}
& h_t=f_x-1; && x=h(t), \quad t>0. 
\end{align}
We next introduce $w:\overline{D}_{\infty}\rightarrow\mathbb{R}$ defined by,
\begin{equation}\label{prop3:11}
w(x,t)=f_t(x,t)\quad \forall \ \left(x,t\right)\in \overline{D}_{\infty}.
\end{equation}
From \eqref{prop3:3} it follows that,
\begin{equation}\label{prop3:11.5}
w \in C^{2} \hspace{-0.5mm}\left(\overline{D}_{\infty}\right)\hspace{-0.5mm},
\end{equation} 
whilst from \eqref{prop3:3} and \eqref{prop3:7}, we have that,
\begin{equation*}\label{prop3:12}
\varepsilon f_{tt}=D(u)f_{xxt}+D'(u)u_t f_{xx} \quad \text{ on } D_{\infty},
\end{equation*}
which becomes, via \eqref{prop3:4} and \eqref{prop3:11},
\begin{equation*}\label{prop3:13}
\varepsilon w_t=D(u)w_{xx}+\frac{D'(u)}{D(u)}f_{xx}w \quad \text{ on } D_{\infty}.
\end{equation*}
Hence, we have,
\begin{equation}\label{prop3:14}
w_t-a(x,t)w_{xx}-c(x,t)w=0 \quad \text{ on } D_{\infty},
\end{equation}
where,
\begin{equation}\label{prop3:15}
a(x,t)= \frac{D(u(x,t))}{\varepsilon}>\frac{D_m}{\varepsilon}>0 \quad \forall \ \left(x,t\right)\in {\overline{D}_\infty}, 
\end{equation}
via (D2), and, 
\begin{equation*}\label{prop3:16}
c(x,t)=\frac{D'(u(x,t))}{\varepsilon D(u(x,t))}f_{xx}(x,t) \quad \forall \ \left(x,t\right)\in {\overline{D}_\infty}.
\end{equation*}
It follows from \eqref{uinC}, (D3) and \eqref{prop3:3} that $c(x,t)$ is continuous on $\overline{D}_{\infty}$, and so is continuous and bounded on $\overline{D}_{T}$, for any $T>0$. 
Also, via \eqref{prop3:8} together with \eqref{uinC} and \eqref{prop3:3},
\begin{equation*}\label{prop3:17}
f_{xt}=ku_t;\quad x=0, \quad t>0,
\end{equation*}
and so, via \eqref{prop3:4} and \eqref{prop3:11},
\begin{equation}\label{prop3:18}
w_x=\frac{kw}{D(u)}; \quad x=0, \quad t>0.
\end{equation}
In addition, via \eqref{prop3:3}, \eqref{prop3:9}, regularity condition (R1) and the chain rule we have,
\begin{equation*}\label{prop3:19}
f_t+h_t f_x =0; \quad x=h(t), \quad t>0,
\end{equation*}
and so, via \eqref{prop3:11},
\begin{equation*}\label{prop3:20}
w=-h_t f_x;\quad x=h(t), \quad t>0.
\end{equation*}
However, regularity condition (R1) requires $h_t \geq 0$, and so, via \eqref{prop3:10},
\begin{equation}\label{prop3:21}
f_x \geq 1; \quad x=h(t), \quad t>0.
\end{equation}
Thus, we conclude from regularity condition (R1) and \eqref{prop3:21} that, 
\begin{equation}\label{prop3:22}
w \leq 0;  \quad x=h(t), \quad t>0.
\end{equation}
Next we introduce $v:\overline{D}_{\infty}\rightarrow\mathbb{R}$ such that,
\begin{equation}\label{prop3:23}
w(x,t)=e^{\lambda t}v(x,t) \quad \forall \ \left(x,t\right)\in \overline{D}_{\infty},
\end{equation}
with $\lambda \in \mathbb{R}$ to be chosen. 
From \eqref{prop3:11.5} it follows that, 
\begin{equation}\label{prop3:24}
v \in C^{2} \hspace{-0.5mm}\text{}\left(\overline{D}_{\infty}\right)\hspace{-0.5mm}. 
\end{equation}
Thus, for $\left(x,t\right) \in \overline{D}_{\infty}$ we have,
\begin{align}
\label{prop3:24.5}
&w_x(x,t)=e^{\lambda t}v_x(x,t),
\\
&w_t(x,t)=\lambda e^{\lambda t}v(x,t)+e^{\lambda t}v_t(x,t),
\\
&w_{xx}(x,t)=e^{\lambda t}v_{xx}(x,t).
\end{align}
Then, via \eqref{prop3:23} and \eqref{prop3:24}, with \eqref{prop3:14}, we obtain,
\begin{equation}\label{prop3:25}
v_t-a(x,t)v_{xx}-\left(c(x,t)-\lambda\right)v=0 \quad \text{on } D_{\infty}.
\end{equation} 
Now set $T>0$ and let $C_T>0$ be a bound for $c$ on $\overline{D}_{T}$, so that, 
\begin{equation}\label{prop3:26}
|c(x,t)| \leq C_T \quad \forall \ \left(x,t\right)\in \overline{D}_{T}.
\end{equation}
We next choose
\begin{equation}\label{prop3:27}
\lambda=2C_T >0,
\end{equation}
so that,
\begin{equation}\label{prop3:28}
c(x,t)-\lambda \leq -C_T < 0 \quad \forall \ \left(x,t\right)\in \overline{D}_{T}.
\end{equation}
Suppose that $v$ is not non-positive on $\overline{D}_{T}$. 
Then, 
\begin{equation*}\label{prop3:29}
\underset{\left(x,t\right)\in \overline{D}_{T}}{\sup}\hspace{-2mm}v(x,t) > 0,
\end{equation*}
and, via \eqref{prop3:22} and \eqref{prop3:23}, this cannot be achieved on ${\partial{\overline{D}^T_R}}$. 
Thus, there exists,
\begin{equation}\label{prop3:30}
\left(x^*,t^*\right)\in D_T \cup \partial{D^T_L},
\end{equation}
such that,
\begin{equation}\label{prop3:31}
v(x^*,t^*)= \sup_{(x,t)\in\overline{D}_T}v(x,t)>0 .
\end{equation}
If $\left(x^*,t^*\right)\in D_T$ then $t^*\in\left(0,T\right)$ or $t^*=T$. 
When $t^* \in \left(0,T\right)$ then $\left(x^*,t^*\right)$ is such that,
\begin{align}
\label{prop3:32}
& v_t(x^*,t^*)=v_x(x^*,t^*)=0,
\\
\label{prop3:33}
& v_{xx}(x^*,t^*) \leq 0.
\end{align}
However, from \eqref{prop3:25} with \eqref{prop3:15}, \eqref{prop3:28}, \eqref{prop3:31} and \eqref{prop3:33}, we have,
\begin{equation*}\label{prop3:33.5}
v_t(x^*,t^*)=a(x^*,t^*)v_{xx}(x^*,t^*)+\left(c(x^*,t^*)-\lambda\right)v(x^*,t^*) <0,
\end{equation*}
which contradicts \eqref{prop3:32}. 
Similarly, when $t^*=T$, then $\left(x^*,T\right)$ is such that,
\begin{equation}\label{prop3:34}
v_x(x^*,T)=0, \quad v_{xx}(x^*,T) \leq 0,
\end{equation}
and
\begin{equation}\label{prop3:34.1}
v_t(x^*,T) \geq 0.
\end{equation}
However, via \eqref{prop3:25} with \eqref{prop3:15}, \eqref{prop3:28}, \eqref{prop3:31} and \eqref{prop3:34}, we have,
\begin{equation*}\label{prop3:34.5}
v_t(x^*,T)=a(x^*,T)v_{xx}(x^*,T)+\left(c(x^*,T)-\lambda\right)v(x^*,T) <0,
\end{equation*}
which contradicts \eqref{prop3:34.1}. 
Hence, we conclude that $\left(x^*,t^*\right) \not\in D_T$, so that, from \eqref{prop3:30},
\begin{equation*}\label{prop3:35}
\left(x^*,t^*\right)=\left(0,t^*\right) \in \partial{D^T_L},
\end{equation*}
and, via \eqref{prop3:31},
\begin{equation}\label{prop3:36}
v_x(0,t^*) \leq 0.
\end{equation}
However, via \eqref{prop3:18}, \eqref{prop3:23} and \eqref{prop3:24.5}, we have,
\begin{equation*}\label{prop3:37}
v_x(0,t^*)=\frac{kv(0,t^*)}{D(u(0,t^*))}>0,
\end{equation*}
using (D2) and \eqref{prop3:31}, which contradicts \eqref{prop3:36}. 
Thus, we concluded that, 
\begin{equation*}\label{prop3:38}
\left(x^*,t^*\right) \not\in D_T \cup \partial{D^T_L},
\end{equation*}
which contradicts \eqref{prop3:30}. 
Hence, $v$ must be non-positive on $\overline{D}_{T}$, so that,
\begin{equation}\label{prop3:38.5}
v(x,t) \leq 0 \quad  \forall \  (x,t) \in \overline{D}_{T}. 
\end{equation}
It then follows, via \eqref{prop3:11} and \eqref{prop3:23}, that,
\begin{equation*}\label{prop3:39}
f_t(x,t) \leq 0 \quad \forall \ (x,t)\in \overline{D}_{T}.
\end{equation*} 
Therefore, from \eqref{prop3:4}, we have,
\begin{equation*}\label{prop3:40}
u_t(x,t) \leq 0 \quad \forall \ (x,t)\in \overline{D}_{T}.
\end{equation*}
Since this holds for any $T>0$ we have, 
\begin{equation*}
u_t(x,t) \leq 0 \quad \forall \ (x,t)\in \overline{D}_{\infty},
\end{equation*}
as required. 
\end{proof}
\noindent
As a consequence, we have the refinement,

\begin{corollary}\label{coro4}
Let  $u:\overline{D}_{\infty}\rightarrow\mathbb{R}$ be a solution to [IBVP]. 
Then,
\begin{equation*}
u_t(x,t) < 0 \quad \forall \ \left(x,t\right)\in {D}_{\infty}.
\end{equation*}
\end{corollary}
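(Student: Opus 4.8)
The plan is to mirror the way Proposition \ref{prop1} was sharpened into Corollary \ref{coro2}: Proposition \ref{prop3} already gives $u_t\le 0$ on $\overline{D}_\infty$, and to upgrade this to strict negativity on the open set $D_\infty$ I would argue by contradiction, applying the Strong Parabolic Maximum Principle to the auxiliary variable constructed in the proof of Proposition \ref{prop3}.

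So, suppose there is a point $(x^*,t^*)\in D_\infty$ with $u_t(x^*,t^*)=0$, and fix any $T>t^*$. I recall from the proof of Proposition \ref{prop3} the functions $f=F(u)$, $w=f_t=D(u)u_t$, and $v=e^{-\lambda t}w\in C^2(\overline{D}_\infty)$ with $\lambda=2C_T>0$; by \eqref{prop3:25}, \eqref{prop3:15} and \eqref{prop3:28}, $v$ satisfies $v_t-a(x,t)v_{xx}-(c(x,t)-\lambda)v=0$ on $D_T$ with $a\ge D_m/\varepsilon>0$ and zeroth-order coefficient $c-\lambda\le -C_T<0$ on $\overline{D}_T$, while \eqref{prop3:38.5} gives $v\le 0$ on $\overline{D}_T$. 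Since $w(x^*,t^*)=D(u(x^*,t^*))u_t(x^*,t^*)=0$, we have $v(x^*,t^*)=0=\sup_{\overline{D}_T}v$, so $v$ attains its (non-negative) maximum at the interior point $(x^*,t^*)$. As the zeroth-order coefficient is non-positive, the Strong Parabolic Maximum Principle \cite{WW} forces $v\equiv 0$ on the set of points of $D_T$ joinable to $(x^*,t^*)$ by a curve along which $t$ is non-decreasing; since $h$ is non-decreasing by (R1) and \eqref{4.*}, the time-slices of $D_T$ only grow with $t$, so a short path-construction argument shows this set contains all of $D_{t^*}$. By continuity of $v$, I then get $v\equiv 0$, hence $w\equiv 0$ and $u_t\equiv 0$, on $\overline{D}_{t^*}$.

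The contradiction is then extracted from the free boundary. On $\partial D_R^{t^*}$, differentiating the identity $u(h(t),t)=1$ of \eqref{2NDBC2} in $t$ gives $u_x(h(t),t)h_t(t)+u_t(h(t),t)=0$; but $u_t(h(t),t)=0$ and, using (D1) and \eqref{prop3:21}, $u_x(h(t),t)=f_x(h(t),t)/D(1)=f_x(h(t),t)\ge 1>0$, so $h_t(t)=0$ for all $t\in(0,t^*]$. Since $h\in C^1([0,\infty))$ by \eqref{uinC} and $h(t)\to 0$ as $t\to 0^+$ by \eqref{2NDBC5}, integrating $h_t\equiv 0$ yields $h\equiv 0$ on $[0,t^*]$, whence $D_{t^*}=\emptyset$; this contradicts $(x^*,t^*)\in D_{t^*}$ (indeed it forces $0<x^*<h(t^*)=0$). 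Therefore no such point exists and $u_t(x,t)<0$ for all $(x,t)\in D_\infty$.

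I expect the main obstacle to be the careful application of the Strong Parabolic Maximum Principle: one has to verify the sign condition on the zeroth-order coefficient $c-\lambda$ (which is precisely why the exponential shift $w=e^{\lambda t}v$ was built into Proposition \ref{prop3}) and, more delicately, pin down the parabolic domain of influence on which $v$ is forced to equal its maximum, using the monotonicity of the free boundary $h$ so that this domain reaches back to times small enough for \eqref{2NDBC5} to bite. The rest is routine bookkeeping with the auxiliary functions already assembled in the proof of Proposition \ref{prop3}.
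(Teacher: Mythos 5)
Your proposal is correct, and its skeleton is the same as the paper's: assume an interior zero of $u_t$, transfer it to the auxiliary variable $v=e^{-\lambda t}f_t$ built in the proof of Proposition \ref{prop3}, apply the Strong Parabolic Maximum Principle to force $v\equiv 0$ on $\overline{D}_{t^*}$, and then derive a contradiction. You deviate from the paper in two points of execution. First, the paper performs a second exponential shift, $v=e^{\mu t}\bar v$ with $\mu=-3C_T$, so that the zeroth-order coefficient becomes non-negative and, since $\bar v\le 0$, the term can be discarded, leaving the pure differential inequality $\bar v_t-a\bar v_{xx}\le 0$ to which the maximum principle in its simplest form applies; you instead apply the maximum principle directly to $v$, relying on the classical version that permits a bounded non-positive zeroth-order coefficient when the interior maximum is non-negative (here it equals zero), which is legitimate and saves the extra substitution, at the cost of needing that slightly stronger form of the principle than the inequality form the paper reduces to. Second, your contradiction is extracted from the free boundary: $u_t\equiv 0$ on $\overline{D}_{t^*}$ together with $u_x(h(t),t)\ge 1$ (from \eqref{prop3:21} and (D1)) freezes $h$, and $h(0^+)=0$ then gives $h\equiv 0$ on $[0,t^*]$, emptying $D_{t^*}$; the paper instead deduces $u\equiv 1$ on $\overline{D}_{t^*}$ (using \eqref{2NDBC2} with (R1), (R2)) and contradicts the Robin condition \eqref{2NDBC1}. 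Both contradictions are sound, and your path-connectivity argument for the parabolic domain of influence, using the monotonicity of $h$, is exactly what is needed.
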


\begin{proof}
We introduce the function $\bar{v}:\overline{D}_{\infty}\rightarrow\mathbb{R}$ such that,
\begin{equation}\label{coro4:1}
v(x,t)=e^{{\mu}t}\bar{v}(x,t) \quad \forall \   \left(x,t\right) \in \overline{D}_{\infty},
\end{equation}
with $v:\overline{D}_{\infty}\rightarrow\mathbb{R}$ as introduced in \eqref{prop3:23} and with $\mu \in \mathbb{R}$ to be chosen. 
From \eqref{prop3:24} it follows that, 
\begin{equation}\label{coro4:2}
\bar{v} \in C^{2}\hspace{-0.5mm}\left(\overline{D}_{\infty}\right)\hspace{-0.5mm}.
\end{equation}
whilst from \eqref{prop3:38.5},
\begin{equation}\label{coro4:2.5}
\bar{v}(x,t) \leq 0 \quad \forall \   \left(x,t\right) \in \overline{D}_{\infty}.
\end{equation}
From \eqref{coro4:1} we have that for $\left(x,t\right) \in \overline{D}_{\infty}$,
\begin{align}
\label{coro4:3}
&v_x(x,t)=e^{\mu t}\bar{v}_x(x,t),
\\
\label{coro4:4}
&v_t(x,t)=\mu e^{\mu t}\bar{v}(x,t)+e^{\mu t}\bar{v}_t(x,t),
\\
\label{coro4:5}
&v_{xx}(x,t)=e^{\mu t}\bar{v}_{xx}(x,t).
\end{align}
Hence, after substituting \eqref{coro4:3}-\eqref{coro4:5} into \eqref{prop3:25}, we obtain,
\begin{equation}\label{coro4:6}
\bar{v}_t-a(x,t)\bar{v}_{xx}-\left(c(x,t)-\left(\lambda+\mu\right)\right)\bar{v}=0 \quad \forall \   \left(x,t\right) \in D_{\infty}.
\end{equation}
Setting $T>0$ and recalling \eqref{prop3:26} and \eqref{prop3:27}, we have,
\begin{equation*}\label{coro4:7}
c(x,t)-\left(\lambda+\mu\right) \geq -C_T - \left(2C_T+\mu\right)=-3C_T-\mu \quad  \forall \  (x,t) \in \overline{D}_T.
\end{equation*}
We now choose,
\begin{equation*}\label{coro4:8}
\mu=-3C_T,
\end{equation*}
so that,
\begin{equation}\label{coro4:9}
c(x,t)-\left(\lambda+\mu\right) \geq 0 \quad \forall \   \left(x,t\right) \in \overline{D}_{T}.
\end{equation}
Thus, from \eqref{coro4:6} and \eqref{coro4:9} together with \eqref{coro4:2.5}, we have,
\begin{equation}\label{coro4:10}
\bar{v}_t-a(x,t)\bar{v}_{xx} \leq 0 \text{ }\quad \forall \   \left(x,t\right) \in D_{T}.
\end{equation}
Now suppose there exists $\left(x^*,t^*\right)\in D_T$ such that, 
\begin{equation*}\label{coro4:11}
\bar{v}(x^*,t^*)=0= \sup_{(x,t)\in\overline{D}_T}\bar{v}(x,t).
\end{equation*}
Then, via \eqref{coro4:2}, we have that $\bar{v}$ is continuous on $\overline{D}_{T}$ and that $\bar{v}_x$, $\bar{v}_t$ and $\bar{v}_{xx}$ exist and are continuous on $D_T$. 
Hence, with \eqref{coro4:10}, we may apply the Strong Parabolic Maximum Principle \cite{WW} to $\bar{v}$ on $\overline{D}_T$, which requires, 
\begin{equation}\label{coro4:12}
\bar{v}(x,t)=0 \quad \forall \   \left(x,t\right) \in \overline{D}_{t*}.
\end{equation}
Then, using \eqref{coro4:12} together with \eqref{coro4:1}, \eqref{prop3:23}, \eqref{prop3:11}, \eqref{prop3:4} and (D2) we have, 
\begin{equation*}\label{coro4:13}
u_t(x,t)=0 \quad \forall \   \left(x,t\right) \in \overline{D}_{t*}.
\end{equation*}
It then follows, via \eqref{2NDBC2} and regularity conditions (R1) and (R2) that, 
\begin{equation*}\label{coro4:15}
u(x,t)=1 \quad \forall \   \left(x,t\right) \in \overline{D}_{t*}.
\end{equation*} 
However, this contradicts \eqref{2NDBC1}. 
Therefore,
\begin{equation*}\label{coro4:16}
\bar{v}(x,t)<0 \quad \forall \   \left(x,t\right) \in D_T,
\end{equation*} 
and, via \eqref{coro4:1}, \eqref{prop3:23}, \eqref{prop3:11}, \eqref{prop3:4} and (D2), we have,
\begin{equation*} \label{coro4:17}
u_t(x,t)<0 \quad \forall \   \left(x,t\right) \in D_T.
\end{equation*}
Since this holds for any $T>0$, we have,
\begin{equation*}\label{coro4:18}
u_t(x,t)<0 \quad \forall \   \left(x,t\right) \in D_{\infty},
\end{equation*}
as required. 
\end{proof}
\noindent
We next have,

\begin{proposition}\label{prop5}
Let $u:\overline{D}_{\infty}\rightarrow\mathbb{R}$ be a solution to [IBVP]. 
Then,
\begin{equation*}
1<D(u(x,t))u_x(x,t)<k \quad \forall \   \left(x,t\right) \in D_{\infty}.
\end{equation*}
\end{proposition}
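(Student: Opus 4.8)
The plan is to work with the flux variable $p:\overline{D}_\infty\to\mathbb{R}$ defined by $p(x,t)=D(u(x,t))u_x(x,t)$, which is precisely $f_x$ for the function $f=F\circ u$ introduced in the proof of Proposition~\ref{prop3}. By \eqref{uinC} and (D3) we have $p\in C^2(\overline{D}_\infty)$. The whole argument rests on one observation: the partial differential equation \eqref{2NDPDE} can be rewritten as $p_x=(D(u)u_x)_x=\varepsilon u_t$, which holds on $D_\infty$, and hence, since both sides are continuous on $\overline{D}_\infty$ by \eqref{uinC}, on all of $\overline{D}_\infty$.

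First I would record the sign of $p_x$. By Proposition~\ref{prop3}, $u_t\le 0$ on $\overline{D}_\infty$, so $p_x\le 0$ on $\overline{D}_\infty$; by Corollary~\ref{coro4}, $u_t<0$ on $D_\infty$, so $p_x<0$ on $D_\infty$. Consequently, for each fixed $t>0$ with $h(t)>0$, the map $x\mapsto p(x,t)$ is continuous on $[0,h(t)]$, and integrating $p_x(\cdot,t)$ gives, for $(x,t)\in D_\infty$, both $p(x,t)-p(0,t)=\int_0^x p_x(s,t)\,ds<0$ and $p(x,t)-p(h(t),t)=-\int_x^{h(t)}p_x(s,t)\,ds>0$, each inequality strict because $p_x(\cdot,t)<0$ throughout the open interval $(0,h(t))$ and $x\in(0,h(t))$. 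Thus
\begin{equation*}
p(h(t),t)<p(x,t)<p(0,t)\qquad\text{for all }(x,t)\in D_\infty.
\end{equation*}

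It then remains to evaluate the two endpoint values. At $x=0$, condition \eqref{2NDBC1} gives $p(0,t)=ku(0,t)$, and Proposition~\ref{prop1} gives $u(0,t)\le 1$, so $p(0,t)\le k$. At $x=h(t)$, condition \eqref{2NDBC3} gives $p(h(t),t)=h_t(t)+1$, and \eqref{4.*} gives $h_t(t)\ge 0$, so $p(h(t),t)\ge 1$. Combining these with the previous display yields $1<D(u(x,t))u_x(x,t)<k$ for all $(x,t)\in D_\infty$, which is the claim (there being nothing to prove at any time $t$ with $h(t)=0$).

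The argument is essentially routine once the identity $p_x=\varepsilon u_t$ is in hand; the only point needing a little care is that this identity, and the continuity of $p_x$, are valid up to the parabolic boundary — in particular at $x=0$ — which is exactly what the regularity \eqref{uinC} provides, so that the integration from $x=0$ is legitimate. Should a more principle-based proof be preferred, one could instead differentiate \eqref{2NDPDE} in $x$ to see that $p$ satisfies the linear parabolic equation $p_t-\varepsilon^{-1}D(u)p_{xx}-\varepsilon^{-1}D'(u)u_x p_x=0$ on $D_\infty$ and apply the weak parabolic maximum and minimum principles on $\overline{D}_T$ together with the boundary data above; but the monotonicity route via Corollary~\ref{coro4} is shorter and avoids introducing further bounds on the coefficients.
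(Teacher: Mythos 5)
Your proposal is correct and follows essentially the same route as the paper: both rest on the observation that the flux $D(u)u_x=f_x$ is strictly decreasing in $x$ on each time slice because $(D(u)u_x)_x=\varepsilon u_t<0$ on $D_\infty$ (Corollary \ref{coro4}), and then compare with the endpoint values $ku(0,t)\le k$ from \eqref{2NDBC1} with Proposition \ref{prop1} and $h_t+1\ge 1$ from \eqref{2NDBC3} with (R1). The only cosmetic difference is that you integrate $p_x$ via the fundamental theorem of calculus where the paper invokes the mean value theorem for $f_x$; the substance is identical.
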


\begin{proof}
Fix $t>0$ and let $\left(x,t\right)\in D_{\infty}$. 
Applying the mean value theorem, with $f:\overline{D}_{\infty}\rightarrow\mathbb{R}$ as defined in \eqref{prop3:2}, we have,
\begin{equation}\label{prop5:1}
f_x(x,t)-f_x(0,t)= f_{xx}(\theta x,t)x,
\end{equation}
and, 
\begin{equation}\label{prop5:2}
f_x(h(t),t)-f_x(x,t)=f_{xx}(x + \theta'(h(t)-x),t)(h(t)-x),
\end{equation}
with $0<\theta,\theta'<1$. 
Now, from \eqref{prop3:7}, we have,
\begin{equation*}\label{prop5:3}
\frac{\varepsilon}{D(u)}f_t=f_{xx} \quad \text{ on } D_{\infty}.
\end{equation*}
Thus, via (D2), \eqref{prop3:4} and Corollary \ref{coro4}, it follows that,
\begin{equation}\label{prop5:4}
f_{xx} < 0 \quad \text{ on } D_{\infty}.
\end{equation}
Consequently, via \eqref{prop5:1}, we have,
\begin{equation*}\label{prop5:5}
f_x(x,t)<f_x(0,t)\quad \forall \   \left(x,t\right) \in D_{\infty},
\end{equation*}
and, via \eqref{prop3:5},
\begin{equation}\label{prop5:6}
D(u(x,t))u_x(x,t)<D(u(0,t))u_x(0,t)\quad \forall \   \left(x,t\right) \in D_{\infty}.
\end{equation}
Next, using \eqref{2NDBC1} and Proposition \ref{prop1}, we have,
\begin{equation*}\label{prop5:7}
D(u(0,t))u_x(0,t)=ku(0,t) \leq k \quad \forall \   t \in \left(0,\infty\right).
\end{equation*}
Hence, via \eqref{prop5:6},
\begin{equation}\label{prop5:8}
D(u(x,t))u_x(x,t)<k \quad \forall \   \left(x,t\right) \in D_{\infty}.
\end{equation}
Similarly, via \eqref{prop5:2} and \eqref{prop5:4}, we have,
\begin{equation*}\label{prop5:9}
f_x(h(t),t)<f_x(x,t) \quad \forall \   \left(x,t\right) \in D_{\infty},
\end{equation*}
which, via \eqref{prop3:5}, becomes,
\begin{equation}\label{prop5:10}
D(u(x,t))u_x(x,t)>D(u(h(t),t))u_x(h(t),t)\quad \forall \   \left(x,t\right) \in D_{\infty}.
\end{equation}
Also, using \eqref{2NDBC3} and regularity condition (R1), we have,
\begin{equation*}\label{prop5:11}
D(u(h(t),t))u_x(h(t),t)=h_t(t)+1 \geq 1 \quad \forall \   t \in \left(0,\infty\right).
\end{equation*}
Hence, via \eqref{prop5:10},
\begin{equation*}\label{prop5:12}
D(u(x,t))u_x(x,t)>1 \quad \forall \   \left(x,t\right) \in D_{\infty},
\end{equation*}
and therefore, with \eqref{prop5:8}, we have, 
\begin{equation*}
1<D(u(x,t))u_x(x,t)<k \quad \forall \   \left(x,t\right) \in D_{\infty},
\end{equation*}
as required. 
\end{proof}
\noindent
The regularity condition (R2) with (D3) then immediately allows for,

\begin{corollary}\label{coro6}
Let  $u:\overline{D}_{\infty}\rightarrow\mathbb{R}$ be a solution to [IBVP]. 
Then,
\begin{equation*} \label{coro6:1}
1 \leq D(u(x,t))u_x(x,t) \leq k \quad \forall \   \left(x,t\right) \in \overline{D}_{\infty}.
\end{equation*}
\end{corollary}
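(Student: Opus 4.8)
The plan is to obtain this purely as a continuity/limiting consequence of Proposition \ref{prop5}. The first step is to record that the map $g:\overline{D}_\infty\to\mathbb{R}$ defined by $g(x,t)=D(u(x,t))u_x(x,t)$ is continuous on all of $\overline{D}_\infty$: by regularity condition (R2) both $u$ and $u_x$ are continuous on $\overline{D}_\infty$, and by (D3) the function $D$ is continuous on $\mathbb{R}$, so $g$, being a product of compositions of continuous functions, is continuous on $\overline{D}_\infty$. (Equivalently, $g=f_x$ with $f=F(u)$ as in \eqref{prop3:2}, which lies in $C^3(\overline{D}_\infty)$ by \eqref{prop3:3}.)

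The second step is to push the strict bounds of Proposition \ref{prop5} to the closure. Since $\overline{D}_\infty=\bigcup_{T>0}\overline{D}_T$ and each $D_T$ is dense in its closure $\overline{D}_T$, the set $D_\infty$ is dense in $\overline{D}_\infty$. Hence, given any $(x,t)\in\overline{D}_\infty$, choose a sequence $(x_n,t_n)\in D_\infty$ with $(x_n,t_n)\to(x,t)$; by Proposition \ref{prop5} we have $1<g(x_n,t_n)<k$ for every $n$, and by continuity of $g$ we get $g(x,t)=\lim_{n\to\infty}g(x_n,t_n)$, so $g(x,t)\in[1,k]$. As $(x,t)\in\overline{D}_\infty$ was arbitrary, this is precisely the claimed inequality $1\le D(u(x,t))u_x(x,t)\le k$ on $\overline{D}_\infty$.

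If one prefers to avoid the density remark on the two lateral boundary components, the bounds there can instead be read off directly: on $\partial\overline{D}^\infty_L$, condition \eqref{2NDBC1} together with Proposition \ref{prop1} gives $g(0,t)=ku(0,t)\le k$, while on $\partial\overline{D}^\infty_R$, condition \eqref{2NDBC3} with (R1) gives $g(h(t),t)=h_t(t)+1\ge 1$; combining this with the fact that $f_x(\cdot,t)$ is strictly decreasing on each time slice (from $f_{xx}<0$ on $D_\infty$, established in the proof of Proposition \ref{prop5}) pins $g$ between $1$ and $k$ on each closed slice $[0,h(t)]\times\{t\}$ with $t>0$, and the single remaining point $(0,0)$ is recovered by continuity of $g$.

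There is no real obstacle here: the only point requiring any care is the elementary fact that a limit of a sequence of numbers each strictly inside $(1,k)$ need only lie in the closed interval $[1,k]$, which is exactly why the corollary asserts $\le$ rather than the strict inequalities of Proposition \ref{prop5}.
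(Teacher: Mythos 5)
Your proposal is correct and matches the paper's reasoning: the paper gives no separate proof, remarking only that regularity (R2) together with (D3) (i.e.\ continuity of $D(u)u_x$ on $\overline{D}_{\infty}$) immediately upgrades the strict interior bounds of Proposition \ref{prop5} to the non-strict bounds on the closure, which is exactly your limiting argument. Your additional direct verification on the lateral boundaries via \eqref{2NDBC1} and \eqref{2NDBC3} is a harmless bonus but not needed.
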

\noindent
In addition, we have,

\begin{corollary}\label{coro7}
The existence of a solution to [IBVP] requires $k>1.$
\end{corollary}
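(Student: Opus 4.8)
The plan is to read this off directly from Proposition~\ref{prop5}. Let $u:\overline{D}_{\infty}\to\mathbb{R}$, together with the free boundary $h$, be a solution to [IBVP]. By (R1) and \eqref{4.*} the function $h$ is continuous, non-negative and non-decreasing with $h(0)=0$, so for a genuine solution $h(t)>0$ for all $t>0$ and hence the open set $D_{\infty}$ is non-empty. Fixing any $(x_0,t_0)\in D_{\infty}$, Proposition~\ref{prop5} gives
\begin{equation*}
1<D(u(x_0,t_0))\,u_x(x_0,t_0)<k,
\end{equation*}
and in particular $1<k$, as claimed.

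To make the non-emptiness step self-contained I would rule out $h\equiv 0$ directly: if $h(t)=0$ for all $t\ge 0$, then \eqref{2NDBC2} forces $u(0,t)=1$, so (D1) and \eqref{2NDBC1} give $u_x(0,t)=k$, and then \eqref{2NDBC3} yields $h_t(t)=k-1$ for $t>0$; since $h\equiv 0$ this would require $k=1$, in which case there is no region on which the quasi-linear equation \eqref{2NDPDE} is posed and the regularity requirements (R2) are vacuous, so no such degenerate object qualifies as a solution. Hence every solution has $D_{\infty}\neq\emptyset$, which is all that is needed above.

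It is worth noting that the non-strict bound $k\ge 1$ already follows from Corollary~\ref{coro6} together with \eqref{2NDBC1} and Proposition~\ref{prop1}: evaluating at $x=0$ gives $1\le D(u(0,t))u_x(0,t)=ku(0,t)\le k$. Upgrading this to the strict inequality $k>1$ is precisely where the strict version of the flux bound in Proposition~\ref{prop5} is needed (which itself rests on the strict monotonicity $u_t<0$ from Corollary~\ref{coro4}), so I do not expect a proof that bypasses the interior analysis. The only mild obstacle is the bookkeeping around $D_{\infty}\neq\emptyset$; granting that, the statement is an immediate consequence of Proposition~\ref{prop5}.
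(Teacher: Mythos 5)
Your proof is correct and follows essentially the same route as the paper: the paper's own proof simply invokes Proposition \ref{prop5} and notes that $1<D(u(x,t))u_x(x,t)<k$ on $D_\infty$ forces $k>1$. Your additional care in ruling out the degenerate case $h\equiv 0$ (so that $D_\infty\neq\emptyset$) is a reasonable supplement that the paper leaves implicit, but it does not change the argument.
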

\begin{proof}
Let $u: \overline{D}_{\infty}\rightarrow\mathbb{R}$ and $h:[0,\infty )\rightarrow\mathbb{R}$ be a solution to [IBVP]. 
Then from Proposition \ref{prop5} we have, 
\begin{equation*}
1<D(u(x,t))u_x(x,t)<k \quad \forall \   \left(x,t\right) \in D_{\infty},
\end{equation*}
which is only possible when $k>1$. 
\end{proof}
\noindent
Next we have,

\begin{proposition}\label{propnew}
Let $u:\overline{D}_{\infty}\rightarrow\mathbb{R}$ be a solution to [IBVP]. 
Then, 
\begin{align}
\nonumber & u(0,t)\geq \frac{1}{k} \quad \forall \  t \in (0,\infty), 
\\
\nonumber & u(x,t)>\frac{1}{k} \quad \forall \  (x,t) \in \overline{D}_{\infty} \setminus (\{0\}\times (0,\infty)).
\end{align}
\end{proposition}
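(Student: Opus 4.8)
The plan is to establish the two inequalities separately: the first is essentially immediate from the two-sided bound on $D(u)u_x$ already proved, and the second follows by combining the first with strict spatial monotonicity of $u$.

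\textbf{First inequality.} Fix $t\in(0,\infty)$. If $h(t)>0$, then $(0,t)\in\overline{D}_\infty$, so Corollary \ref{coro6} applies at $(0,t)$ to give $D(u(0,t))u_x(0,t)\ge 1$. Boundary condition \eqref{2NDBC1} reads $D(u(0,t))u_x(0,t)=ku(0,t)$, and $k>1$ by Corollary \ref{coro7}; hence $ku(0,t)\ge 1$, i.e. $u(0,t)\ge 1/k$. In the case $h(t)=0$, condition \eqref{2NDBC2} gives $u(0,t)=u(h(t),t)=1\ge 1/k$ directly. Either way, $u(0,t)\ge 1/k$ for all $t\in(0,\infty)$.

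\textbf{Second inequality.} By Corollary \ref{coro6} we have $D(u(x,t))u_x(x,t)\ge 1$ on $\overline{D}_\infty$, while $D(u(x,t))>0$ by (D2); therefore $u_x(x,t)>0$ throughout $\overline{D}_\infty$, so for each fixed $t>0$ the map $x\mapsto u(x,t)$ is strictly increasing on $[0,h(t)]$. Now take $(x,t)\in\overline{D}_\infty\setminus(\{0\}\times(0,\infty))$. If $t>0$ and $0<x\le h(t)$, then either $(x,t)\in D_\infty$, in which case $x>0$ gives $u(x,t)>u(0,t)\ge 1/k$ by the first part, or $x=h(t)$, in which case $u(x,t)=1>1/k$ by \eqref{2NDBC2} and $k>1$. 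The only remaining possibility is $t=0$, and since $h(0)=0$ (by \eqref{2NDBC5} and (R1)) this forces $(x,t)=(0,0)$, where continuity of $u$ together with \eqref{2NDBC4} gives $u(0,0)=1>1/k$. These cases exhaust $\overline{D}_\infty\setminus(\{0\}\times(0,\infty))$, completing the proof.

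There is no genuine analytic difficulty here; the work was all done in Proposition \ref{prop5} and Corollary \ref{coro6}. The only points requiring care are organisational: verifying that every point of $\overline{D}_\infty\setminus(\{0\}\times(0,\infty))$ falls into one of the cases above, and checking that Corollary \ref{coro6} is legitimately applied at the left boundary $x=0$. One could avoid even the brief $h(t)=0$ aside by instead integrating $D(u)u_x\ge 1$ in $x$ to obtain $F(u(x,t))\ge F(u(0,t))+x$ and invoking strict monotonicity of $F$, but the monotonicity-of-$u$ argument is the most transparent.
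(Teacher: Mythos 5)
Your proof is correct and follows essentially the same route as the paper: the first inequality comes from combining the boundary condition \eqref{2NDBC1} with the lower bound in Corollary \ref{coro6}, and the second from strict spatial monotonicity of $u$ (the paper phrases this as $u(x,t)-u(0,t)=\int_0^x u_\lambda\,d\lambda>0$) together with the corner value $u(0,0)=1>k^{-1}$. Your extra case analysis ($h(t)=0$, $x=h(t)$, $t=0$) is harmless bookkeeping that the paper handles implicitly.
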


\begin{proof}
From \eqref{2NDBC1} we have,
\begin{equation*}\label{new1}
ku(0,t)=D(u(0,t))u_x(0,t) \quad \forall \  t \in (0,\infty).
\end{equation*}
It then follows, from Corollary \ref{coro6}, that, 
\begin{equation}\label{new2}
u(0,t)\geq\frac{1}{k}\quad \forall \  t \in (0,\infty).
\end{equation}
In addition,
\begin{equation}\label{new3}
u(x,t)-u(0,t)=\int_{0}^{x} u_\lambda(\lambda,t)d\lambda>0 \quad \forall \  (x,t) \in \overline{D}_{\infty} \setminus (\{0\}\times [0,\infty)),
\end{equation}
via Corollary \ref{coro6} and (D2). 
Thus, from \eqref{new3} and \eqref{new2}, we have,
\begin{equation*}
u(x,t)>\frac{1}{k} \quad \forall \  (x,t) \in \overline{D}_{\infty} \setminus (\{0\}\times [0,\infty)).
\end{equation*} 
However, $u(0,0)=1> k^{-1}$, via Corollary \ref{coro7}, and so, 
\begin{equation*}\label{new4}
u(x,t)>\frac{1}{k} \quad \forall \  (x,t) \in \overline{D}_{\infty} \setminus (\{0\}\times (0,\infty)),
\end{equation*}
as required.  
\end{proof}
\noindent
We now obtain bounds on $h$ in the following,

\begin{proposition}\label{prop8}
Let $h:[0,\infty )\rightarrow \mathbb{R}$ describe the free boundary in [IBVP]. 
Then,
\begin{equation*}
0 < h(t) <F(1)-F\left( k^{-1} \right) \quad \forall \   t \in \left(0,\infty\right).
\end{equation*}
\end{proposition}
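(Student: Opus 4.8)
The plan is to pass to the variable $f=F(u)$ from \eqref{prop3:2}, recall that $f(h(t),t)=F(1)$ by \eqref{prop3:9}, and then estimate $f(0,t)$ by integrating $f_x$ across the wax layer. The lower bound $h>0$ I would get by a short contradiction argument using the regularity of $h$ and the fact that a solution requires $k>1$ (Corollary \ref{coro7}).

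\emph{Upper bound.} Fix $t>0$ and suppose for the moment that $h(t)>0$ (this is the lower bound, established below; if $h(t)=0$ the upper bound is trivial since $k>1$). Then
\[
F(1)-F(u(0,t))=f(h(t),t)-f(0,t)=\int_{0}^{h(t)}f_x(x,t)\,dx .
\]
Now $f_x(x,t)=D(u(x,t))u_x(x,t)$ is continuous and, by Proposition \ref{prop5}, satisfies $f_x(x,t)>1$ for every $x\in(0,h(t))$, so the integral strictly exceeds $h(t)$; hence $h(t)<F(1)-F(u(0,t))$. By Proposition \ref{propnew} we have $u(0,t)\geq k^{-1}$, and $F$ is strictly increasing, so $F(u(0,t))\geq F(k^{-1})$ and therefore $h(t)<F(1)-F(k^{-1})$, as required.

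\emph{Lower bound.} By \eqref{4.*}, $h\geq0$ and $h_t\geq0$ on $[0,\infty)$, so $h$ is non-decreasing with $h(0)=0$ (using \eqref{2NDBC5}). If $h(t_0)=0$ for some $t_0>0$, then $h\equiv0$ on $[0,t_0]$, so $h_t\equiv0$ on $(0,t_0)$ and, by continuity of $h_t$ from (R1), $h_t(t_0)=0$. But at the coincident point $x=h(t_0)=0$, conditions \eqref{2NDBC1} and \eqref{2NDBC2} give $D(u(0,t_0))u_x(0,t_0)=ku(0,t_0)=k$, so \eqref{2NDBC3} forces $h_t(t_0)=k-1$; thus $k=1$, contradicting Corollary \ref{coro7}. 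Hence $h(t)>0$ for all $t\in(0,\infty)$, and the two bounds together give the claim.

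I expect the only delicate points to be (i) checking that the strict inequality in the integral estimate is legitimate — it is, because $f_x(\cdot,t)>1$ holds on the whole open interval $(0,h(t))$ and $f_x$ is continuous, so an integral of a continuous function strictly above $1$ over an interval of positive length is strictly above its length — and (ii) the handling of the degenerate configuration $h\equiv0$ on an initial interval in the lower-bound argument, which is resolved by playing \eqref{2NDBC1}–\eqref{2NDBC3} off against one another at $x=h(t_0)=0$. Everything else is a direct application of Propositions \ref{prop5} and \ref{propnew} and Corollary \ref{coro7}.
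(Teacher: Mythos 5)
Your proof is correct and follows essentially the paper's route: for the upper bound you integrate $f_x=D(u)u_x>1$ across the layer (the paper uses the mean value theorem on $F(u(\cdot,t))$, which is the same estimate) and then invoke $u(0,t)\geq k^{-1}$ exactly as in the paper. Your lower bound is a minor variant — a contradiction at a hypothetical positive time $t_0$ with $h(t_0)=0$, where \eqref{2NDBC1}--\eqref{2NDBC3} force $h_t(t_0)=k-1>0$ — whereas the paper evaluates the same boundary conditions directly at $t=0$ (via the regularity \eqref{uinC} and $u(0,0)=1$) to get $h_t(0)=k-1>0$ and then uses monotonicity; the ingredients and the degenerate-configuration issue are identical in both versions.
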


\begin{proof}
First, via \eqref{2NDBC5} and \eqref{uinC}, we have $h(0)=0$.
Also, from \eqref{2NDBC2} and \eqref{uinC} we have $u\left(0,0\right)=1$. 
Hence, via \eqref{uinC}, Corollary \ref{coro7}, \eqref{2NDBC1} and (D1) we have,
\begin{equation}\label{prop8:1}
D(1)u_x(0,0)=u_x(0,0)=ku\left(0,0\right)=k>1.
\end{equation}
Thus, via \eqref{2NDBC3}, \eqref{uinC} and \eqref{prop8:1},
\begin{equation}\label{prop8:2}
h_t(0)=u_x(0,0)-1=k-1>0.
\end{equation}
It then follows from \eqref{prop8:2}, with $h(0)=0$ and regularity condition (R1), that, 
\begin{equation}\label{prop8:3}
h(t)>0 \quad \forall \  t \in \left(0,\infty \right).
\end{equation}
Next take any $t>0$. 
It follows from the mean value theorem with \eqref{prop3:2}, \eqref{prop3:3} and \eqref{prop3:5} that,
\begin{align}
\nonumber 
F(u(h(t),t))-F(u(0,t)) & = F_x \left(u \left( \hat{\theta} h(t),t \right) \right) h(t) \\
& 
\label{prop8:4} 
= D\left(u\left(\hat{\theta}h(t),t\right)\right) u_x\left(\hat{\theta}h(t),t\right)h(t),
\end{align}
with $0<\hat{\theta}<1$. 
Then, via Proposition \ref{prop5}, \eqref{prop8:3} and \eqref{prop8:4},
\begin{equation*}\label{prop8:5}
F(u(h(t),t))-F(u(0,t))>h(t),
\end{equation*}
from which we obtain, via \eqref{2NDBC2},
\begin{equation}\label{prop8:6}
h(t)<F(1)-F(u(0,t)).
\end{equation}
However, via \eqref{2NDBC1} and Corollary \ref{coro6},
\begin{equation*}\label{prop8:6.5}
u(0,t) = \frac{D(u(0,t))u_x(0,t)}{k} \geq \frac{1}{k}.
\end{equation*}
Therefore, via \eqref{prop8:6} and \eqref{eq:s4definitionF},
\begin{equation}\label{prop8:7}
h(t) < F(1)-F(u(0,t)) \leq F(1)- F\left( k^{-1} \right) \quad \forall \  t \in \left(0,\infty \right).
\end{equation}
Thus, combining \eqref{prop8:3} and \eqref{prop8:7}, we have,
\begin{equation*}\label{prop8:8}
0 < h(t) < F(1)- F\left( k^{-1} \right) \quad \forall \  t \in \left(0,\infty \right),
\end{equation*}
as required.  
\end{proof}
\noindent
Now let $k>1$ and let $u:\overline{D}_{\infty}\rightarrow\mathbb{R}$ and $h:\left[0,\infty\right)\rightarrow\mathbb{R}$ be a solution to [IBVP]. 
It follows from \eqref{2NDBC3}, (R1) and Proposition \ref{prop8} that $h(t)$ is a monotone increasing function of $t$ and is bounded above by $F(1)-F\left( k^{-1}\right) >0$. 
Consequently, there exists a constant $0 < \bar{h} \leq F(1)-F\left( k^{-1}\right)$ such that,
\begin{equation*}
h(t) \rightarrow \bar{h} \text{ as } t \rightarrow \infty.
\end{equation*}
Similarly, Proposition \ref{prop1}, Proposition \ref{prop3}, Corollary \ref{coro6} and Proposition \ref{propnew} together with the Ascoli-Arzel\`a Compactness Theorem establish the existence of a continuous function $\bar{u}:[0,\bar{h}]\rightarrow \mathbb{R}$ such that, 
\begin{equation*}
u(x,t)\rightarrow \bar{u}(x) \text{ as } t \rightarrow \infty \text{ uniformly for } 0 \leq x \leq h(t),
\end{equation*}
with $\bar{u}(x)$ monotone increasing  for $x \in [0,\bar{h}]$, and,
\begin{equation*}
\frac{1}{k} \leq \bar{u}(x) \leq 1 \quad \forall \   x \in [0,\bar{h}].
\end{equation*}
Further, the bounds obtained on $u_x$, $u_t$ and $h_t$, and consequently bounds on $u_{xx}$ together with the Ascoli-Arzela Theorem, allow for a deduction that $\bar{u}_x$ and $\bar{u}_{xx}$ exist and are continuous, and moreover, $\bar{u}:[0,\bar{h}]\rightarrow \mathbb{R}$ and $\bar{h}$ must satisfy problem \eqref{SSPDE}-\eqref{SSBC3}, and so are steady state solutions to [IBVP]. 
Hence $\bar{u}=u_s$ and $\bar{h}=h_s$, as discussed in subsection \ref{steadystate}. 
It is convenient to summarize the results in this subsection in the following,

\begin{theorem}\label{theorem9}
The existence of a solution to [IBVP] requires $k > 1$. 
With $k > 1$, let $u:\overline{D}_{\infty}\rightarrow\mathbb{R}$ and $h:\left[0,\infty\right)\rightarrow\mathbb{R}$ be a solution to [IBVP].
Then,
\renewcommand{\labelenumi}{\roman{enumi}}
\begin{enumerate}[({\roman{enumi}})]
\item $\frac{1}{k}<u(x,t)<1 \quad \forall \   \left(x,t\right) \in D_{\infty}$,
\item $u_t(x,t) < 0 \quad \forall \ \left(x,t\right)\in {D}_{\infty}$,
\item $1<D(u(x,t))u_x(x,t)<k \quad \forall \   \left(x,t\right) \in D_{\infty}$,
\item $(D(u)u_x)_x<0 \quad \forall \ \left(x,t\right)\in {D}_{\infty},$
\item $0 < h(t) <F(1)-F\left( k^{-1} \right)$ and $0 \leq h_t(t) < k-1 \quad \forall \   t \in \left(0,\infty\right)$,
\item $h(t) \rightarrow h_s^{-} = F(1)-F \left( k^{-1} \right)$ as $t \rightarrow \infty$,
\item $u(x,t) \rightarrow u_s^{+}(x)= F^{-1}\left(x+F\left( k^{-1} \right)\right)$ as $t \rightarrow \infty$ uniformly for $0 \leq x \leq h(t)$. 
\end{enumerate}
\end{theorem}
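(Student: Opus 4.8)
The proof is essentially a consolidation of the results established earlier in this subsection, together with one additional strict inequality and the long-time convergence argument. First I would dispose of parts (i)--(iv). Part (i) follows immediately by combining Corollary~\ref{coro2} (which gives $0<u<1$ on $D_\infty$) with Proposition~\ref{propnew} (which gives $u>k^{-1}$ at every point of $\overline{D}_\infty$ with $x>0$, in particular throughout $D_\infty$). Part (ii) is precisely Corollary~\ref{coro4}, and part (iii) is precisely Proposition~\ref{prop5}. Part (iv) is then read off from the PDE \eqref{2NDPDE}: since $\varepsilon>0$ and, by part (ii), $u_t<0$ on $D_\infty$, we have $(D(u)u_x)_x=\varepsilon u_t<0$ on $D_\infty$.

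For part (v), the bound $0<h(t)<F(1)-F\left(k^{-1}\right)$ is exactly Proposition~\ref{prop8}, and $h_t(t)\ge 0$ is regularity condition (R1) (equivalently \eqref{4.*}). The point requiring a little more is the strict inequality $h_t(t)<k-1$. Here I would use that, with $f=F(u)$ as in \eqref{prop3:2}, the proof of Proposition~\ref{prop5} shows $f_{xx}<0$ throughout $D_\infty$; hence, for fixed $t>0$ (so that $h(t)>0$ by Proposition~\ref{prop8}), the function $f_x(\cdot,t)$ is strictly decreasing on $[0,h(t)]$, since by the mean value theorem the relevant intermediate point lies in $(0,h(t))$ where $f_{xx}<0$. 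Thus $f_x(h(t),t)<f_x(0,t)$. Evaluating the boundary conditions via \eqref{prop3:5}, we have $f_x(0,t)=ku(0,t)\le k$ by \eqref{2NDBC1} and Proposition~\ref{prop1}, while $f_x(h(t),t)=h_t(t)+1$ by \eqref{2NDBC3}; combining gives $h_t(t)+1<k$, as required.

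Parts (vi) and (vii) are the substantive part, and would essentially formalise the discussion preceding the theorem. From \eqref{2NDBC3} and $h_t\ge 0$, $h$ is monotone non-decreasing, and by Proposition~\ref{prop8} it is bounded above by $F(1)-F\left(k^{-1}\right)$, so $h(t)\to\bar h$ for some $\bar h\in\left(0,F(1)-F\left(k^{-1}\right)\right]$. To identify $\bar h$ and the limit of $u$, I would take a sequence $t_n\to\infty$ and consider the time-translates $u^n(x,t)=u(x,t+t_n)$, $h^n(t)=h(t+t_n)$. The uniform bounds on $u$ (Proposition~\ref{prop1}), on $u_x$ (Corollary~\ref{coro6}), on $u_t$ (Proposition~\ref{prop3}), together with the PDE (which then controls $u_{xx}$ on compact subsets) and Proposition~\ref{propnew}, allow Ascoli--Arzel\`a to extract a subsequence converging to a classical solution $(u^\infty,h^\infty)$ of \eqref{2NDPDE}--\eqref{2NDBC3} with $h^\infty\equiv\bar h$ constant (since $h(t+t_n)\to\bar h$ for every fixed $t$). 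Being $t$-independent, $(u^\infty,h^\infty)$ solves the steady-state problem \eqref{SSPDE}--\eqref{SSBC3}, which by Proposition~\ref{DNEp1} has the unique solution $u_s(x)=F^{-1}\left(x+F\left(k^{-1}\right)\right)$ with $h_s=F(1)-F\left(k^{-1}\right)$. Hence $\bar h=h_s$, and since the limit is independent of the chosen sequence, $h(t)\to h_s$ and $u(\cdot,t)\to u_s$ uniformly as $t\to\infty$, giving (vi) and (vii).

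The main obstacle is precisely this last passage to the limit. The delicate points are: (a) obtaining enough compactness to pass to the limit in the nonlinear term $(D(u)u_x)_x$ and, especially, in the Stefan condition \eqref{2NDBC3} — one needs the limit to actually satisfy \eqref{SSBC3}, which amounts to controlling $h_t(t+t_n)$ rather than merely knowing $h$ converges; and (b) the moving domain $0<x<h(t)$, which one would typically flatten via $y=x/h(t)$ before invoking parabolic Schauder estimates and Ascoli--Arzel\`a. Once the limit is identified as a steady state, the uniqueness statement in Proposition~\ref{DNEp1} immediately pins it down and forces the full (not merely subsequential) convergence.
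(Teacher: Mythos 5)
Your proposal is correct and follows essentially the same route as the paper: parts (i)--(v) are assembled from the earlier propositions and corollaries (the paper leaves the strict bound $h_t<k-1$ implicit in the same mean-value-theorem computations behind Proposition~\ref{prop5} that you spell out), and parts (vi)--(vii) are obtained, as in the paper, from monotonicity and boundedness of $h$ together with the a priori bounds, Ascoli--Arzel\`a compactness, and identification of the limit with the unique steady state of Proposition~\ref{DNEp1}. Your time-translation argument is simply a more explicit formalisation of the paper's (sketchier) limiting step, and the delicate points you flag are precisely those the paper passes over lightly.
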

\noindent
We recall that the limit in $(vi)$ is from below, whilst the limit in ($vii$) is from above. 
Also, we note that in physical terms the requirement that $k>1$, for a solution to [IBVP] to exist, requires that the cooling process has to be sufficiently strong in order for the development of a wax layer to initiate. 
From Theorem \ref{theorem9} ($i$) - ($v$) we have obtained a priori bounds for [IBVP] on $u:\overline{D}_\infty\to\mathbb{R}$, the partial derivative $u_x : {D}_\infty\to\mathbb{R}$, together with $h:(0,\infty )\to\mathbb{R}$. 
Consequently, both global existence and uniqueness for [IBVP], can be anticipated by adopting an iterative approach to accommodate the quasi-linear terms (see, for example \cite{LUS}), whilst following, in principle, Cannon and Hill \cite{Cannon1967}. 
We next develop the analysis of [IBVP] by considering the structure of the solution as $t \rightarrow 0^+$ and correspondingly as $t \rightarrow \infty$.

\section{Coordinate Expansions} \label{CoordExp}
\noindent
We begin this section by analysing the structure of the solution to [IBVP] as $t \rightarrow 0^+$. 
After which we consider the structure of the solution to [IBVP] as $t \rightarrow \infty$.

\subsection{Coordinate Expansions as {\normalfont{$t\rightarrow 0^+$}}}
\noindent
We consider the structure of the solution to [IBVP] (with $k>1$) as $t \rightarrow 0^+$. 
It follows, from \eqref{2NDBC1}-\eqref{2NDBC5}, that, 
\begin{equation*}\label{5.1-1}
h=O(t), \quad u=1+O(t), \quad x=O(t),
\end{equation*} 
as $t \rightarrow 0^+$. 
Therefore, we introduce the scaled coordinate,
\begin{equation}\label{5.1-2}
X=\frac{x}{t}=O(1) \quad \text{ as } t \rightarrow 0^+,
\end{equation}
and write,
\begin{align}
\label{5.1-3}
& h(t)=tH(t),
\\
\label{5.1-3.5}
& u(X,t)=1+tU(X,t),
\end{align}
with
\begin{align}
\label{5.1-4}
& H(t)=H_1+H_2t+O(t^2),
\\
\label{5.1-4.5}
& U(X,t)=U_1(X)+U_2(X)t+O(t^2),
\end{align}
as $t \rightarrow 0^+$ with $0 \leq X \leq H(t)$. 
In terms of $X$, $t$, $H$ and $U$, [IBVP] becomes,
\begin{align}
\label{5.1-5}
& \varepsilon t (U-XU_X+tU_t) = (D(1+tU)U_X)_X; && 0 <X<H(t), \quad t>0, 
\\
\label{5.1-6}
& D(1+tU)U_X = k\left(1+tU\right); && X =0, \quad t>0, 
\\
\label{5.1-7}
& U = 0; && X =H(t), \quad t>0, 
\\
\label{5.1-8}
& H+tH_t=D(1+tU)U_X-1; && X = H(t), \quad t>0,
\\
\label{5.1-9}
& U \text{ bounded as } t \rightarrow 0^+ \text{ uniformly for } && 0 \leq X \leq H(t),
\\
\label{5.1-10}
& H \text{ bounded as } t \rightarrow 0^+.
\end{align}
Substituting from \eqref{5.1-4} and \eqref{5.1-4.5} into \eqref{5.1-5}-\eqref{5.1-8} we have at leading order
\begin{align}
\label{5.1-15}
& U''_1=0; \hspace{20mm} 0<X<H_1,
\\
\label{5.1-16}
& U'_1(0)=k, 
\\
\label{5.1-17}
& U_1(H_1)=0, 
\\
\label{5.1-18}
& H_1=U'_1(H_1)-1.
\end{align}
The solution to this boundary value problem is readily obtained as,
\begin{equation}\label{5.1-25}
H_1=k-1>0,
\end{equation}
with
\begin{equation}\label{5.1-25.5}
U_1(X)= k(X-(k-1)) \quad \forall \ \hspace{1mm} 0 \leq X \leq k-1.
\end{equation}
Terms at $O(t)$ lead to the following boundary value problem for $H_2$ and $U_2$,
\begin{align}
\label{5.1-26}
& \left(D'(1)U_1U'_1+U'_2\right)' = \varepsilon \left(U_1-XU'_1 \right) ; \quad 0<X<H_1,
\\
\label{5.1-27}
& D'(1)U_1(0)U'_1(0)+U'_2(0)=kU_1(0),
\\
\label{5.1-28}
& U'_1(H_1)H_2+U_2(H_1)=0,
\\
\label{5.1-29}
& (2-U_1''(H_1))H_2 = D'(1)U_1(H_1)U'_1(H_1)+U'_2(H_1).
\end{align}
After some calculation, we obtain the solution to this boundary value problem as,  
\begin{align}
\label{DNE5.*}
H_2 & = -\frac{1}{2}k(k-1)(k+\varepsilon(k-1)) ,
\\
\nonumber 
 U_2(X) & = - \left( \frac{\varepsilon}{2} k(k-1) + \frac{D'(1)k^2}{2}\right) X^2 \\
\nonumber
& \quad \ \ \ - \left( k^2(k-1) - D'(1)k^2(k-1) \right) X \\
\nonumber
& \quad \ \ \ + k^2(k-1)^2 \left(\frac{\varepsilon - D'(1)}{2} + 1 \right) \\ 
\label{5.1-42}
 & \quad \ \ \ + \frac{k^3}{2}(k-1)+\frac{\varepsilon}{2}k(k-1)^3 \quad \forall \ \hspace{1mm} 0 \leq X \leq H_1.
\end{align}
Thus, the coordinate expansions \eqref{5.1-3} and \eqref{5.1-3.5} as $t \rightarrow 0^+$, via \eqref{5.1-4}, \eqref{5.1-4.5}, \eqref{5.1-25}, \eqref{5.1-25.5}, \eqref{DNE5.*} and \eqref{5.1-42}, are given by, 
\begin{equation}\label{5.1-44}
h(t) = (k-1)t - \frac{k}{2}(k-1)\left( \varepsilon (k-1) +k \right)t^2 +O(t^3) \quad \text{ as } t \rightarrow 0^+,
\end{equation} 
and,
\begin{align}
\nonumber 
u(X,t) & = 1+(kX-k(k-1))t + \bigg( - \left( \frac{\varepsilon}{2} k(k-1) + \frac{D'(1)k^2}{2}\right) X^2
\\ 
\nonumber
& \quad - \left( k^2(k-1) - D'(1)k^2(k-1) \right) X \\
\nonumber 
& \quad + k^2(k-1)^2 \left(\frac{\varepsilon - D'(1)}{2} + 1 \right) +\frac{k^3}{2}(k-1) +\frac{\varepsilon}{2}k(k-1)^3 \bigg) t^2
\\
\label{5.1-45}
& \quad + O(t^3) \text{ as } t \rightarrow 0^+ \text{ uniformly for } 0 \leq X \leq t^{-1}h(t).
\end{align}
We obtain, from \eqref{5.1-44} and \eqref{5.1-45}, that,
\begin{equation*}
h_t(t) \rightarrow (k-1) \quad \text{ as } t \rightarrow 0^+,
\end{equation*}
whilst recalling \eqref{5.1-2} we have,
\begin{equation} \label{JMNME1}
u_x(x,t) \rightarrow k \quad \text{ as } t \rightarrow 0^+ \text{ uniformly for } 0 \leq x \leq h(t),
\end{equation}
and,
\begin{equation} \label{JMNME2}
u_{xx}(x,t)\rightarrow -\varepsilon k(k-1)-D'(1)k^2 \quad\text{ as } t \rightarrow 0^+ \text{ uniformly for } 0 \leq x \leq h(t).
\end{equation}

\subsection{Coordinate Expansions as $t \to \infty$}
\noindent
We now consider the structure of the solution to [IBVP] as $t \rightarrow \infty$. 
From Theorem \ref{theorem9}, it follows that,
\begin{equation*}\label{5.2-1}
h=h_s+o(1), \quad u=u_s(x)+o(1),
\end{equation*} 
as $t \rightarrow \infty$. 
Thus we write,
\begin{align}
\label{5.2-2}
&h(t)=h_s+\bar{h}(t), 
\\
\label{5.2-2.5}
&u(x,t)=u_s(x)+\bar{u}(x,t),
\end{align}
with, 
\begin{align*}
&\bar{h}(t)=o(1)\text{ as }t \rightarrow \infty, 
\\
&\bar{u}(x,t)=o(1)\text{ as }t \rightarrow \infty \text{ uniformly for } 0 \leq x \leq h(t).
\end{align*}
On substituting from \eqref{5.2-2} and \eqref{5.2-2.5} into [IBVP] we obtain the leading order problem for $\bar{u}$ and $\bar{h}$, as
\begin{align}
\label{5.2-4}
& \varepsilon\bar{u}_t = \left(D(u_s(x))\bar{u}\right)_{xx}; && 0<x<h_s, \quad t\gg 1,
\\
\label{5.2-5}
& (D(u_s(x))\bar{u})_{x}=k\bar{u}; && x=0, \quad t \gg 1,
\\
\label{5.2-6}
& \bar{u}+\bar{h}=0; && x=h_s, \quad t \gg 1,
\\
\label{5.2-7}
& \bar{h}_t=\left(D(u_s(x))\bar{u}\right)_{x}; && x=h_s, \quad t \gg 1.
\end{align}
We can eliminate $\bar{h}(t)$ from \eqref{5.2-4}-\eqref{5.2-7} to obtain,
\begin{align}
\label{5.2-8}
& \varepsilon\bar{u}_t=\left(D(u_s(x))\bar{u}\right)_{xx}; && 0<x<h_s, \quad t\gg 1,
\\
\label{5.2-9}
& \left(D(u_s(x))\bar{u}\right)_{x}=k\bar{u}; && x=0, \quad t \gg 1,
\\
\label{5.2-10}
& \bar{u}_t=-\left(D(u_s(x))\bar{u}\right)_{x}; && x=h_s, \quad t \gg 1, 
\end{align}
after which $\bar{h}(t)$ is recovered as,
\begin{equation}\label{5.2-10.5}
\bar{h}(t)=-\bar{u}(h_s,t);\quad t\gg 1.
\end{equation}
We look for a solution to \eqref{5.2-8}-\eqref{5.2-10} in the form, 
\begin{align}
\label{5.2-11}
&\bar{u}(x,t)=\phi(x)e^{-\lambda t},
\end{align}
with $\lambda \in \mathbb{C}$ to be determined. 
After substituting from \eqref{5.2-11} into \eqref{5.2-8}-\eqref{5.2-10} we obtain the linear eigenvalue problem,
\begin{align}
\label{5.2-13}
& \left( D(u_s(x))\phi\right)_{xx} = -\varepsilon \lambda \phi; && 0 <x<h_s, 
\\
\label{5.2-14}
& \left( D(u_s(x))\phi\right)_{x}=k \phi; && x =0,
\\
\label{5.2-15}
& \lambda \phi = \left( D(u_s(x))\phi\right)_{x}; && x =h_s
\end{align}
with eigenvalue $\lambda \in \mathbb{C}$. 
It is now convenient to introduce the function $\psi : [0,h_s]\rightarrow \mathbb{R}$ defined by,
\begin{equation}\label{5.2-16}
\psi(x) = D(u_s(x))\phi(x) \quad \forall \  \hspace{1mm} 0 \leq x\leq h_s,
\end{equation}
together with the prescribed function $\Delta : [0, h_s]\rightarrow \mathbb{R}$ given by,
\begin{equation}\label{5.2-17}
\Delta(x)= D(u_s(x)) \quad \forall \  \hspace{1mm} 0 \leq x\leq h_s.
\end{equation}
Substituting from \eqref{5.2-16} into \eqref{5.2-13}-\eqref{5.2-15} we obtain the equivalent eigenvalue problem,
\begin{align}
\label{5.2-18}
& \psi'' + \frac{\varepsilon}{\Delta(x)} \lambda \psi=0; && 0<x<h_s,
\\
\label{5.2-19}
& \psi' = \frac{k}{D(k^{-1})} \psi; && x=0, 
\\
\label{5.2-20}
&\psi' = \lambda \psi; && x=h_s. 
\end{align}
Henceforth, we will refer to the generalised Sturm-Liouville eigenvalue problem given by \eqref{5.2-18}-\eqref{5.2-20} as [S-L]. 
We note that [S-L] has the following properties:
\begin{enumerate}
\item[(S1)] The eigenvalues of [S-L] are all real and may be written as,
\begin{equation*}\label{5.2-21}
{\lambda}_0 < {\lambda}_1 < {\lambda}_2 < \cdots <{\lambda}_r < \cdots
\end{equation*}
with ${\lambda}_r \rightarrow \infty$ as $r \rightarrow \infty$.
\item[(S2)] For $\lambda={\lambda}_r$ (where $r=0,1,2,\ldots$) then the corresponding eigenfunction is real valued, say, ${\psi}_r:[0,h_s] \rightarrow \mathbb{R},$ and may be normalised so that, $\psi_r(0)>0$ and,
\begin{equation}\label{5.2-21.5}
\int_{0}^{h_s}\frac{1}{\Delta(\sigma)}{\psi_r^2} (\sigma) d\sigma = 1.
\end{equation}
\item[(S3)] $\psi_r(x)>0$ for all $x \in [0,h_s]$ if and only if $r=0$.
\end{enumerate}
We are now able to establish the following,

\begin{proposition}
Let $\lambda_0 \in \mathbb{R}$ and ${\psi}_0:[0,h_s] \rightarrow \mathbb{R}$ be the zeroth eigenvalue and the corresponding zeroth normalised eigenfunction of [S-L]. 
Then $\lambda_0 >0$.
\end{proposition}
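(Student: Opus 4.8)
The plan is to derive the usual energy (Rayleigh quotient) identity for [S-L] at the zeroth eigenpair and read off the sign of $\lambda_0$ directly. Write $\psi_0:[0,h_s]\to\mathbb{R}$ for the zeroth eigenfunction, normalised as in (S2) so that $\psi_0(0)>0$ and $\int_0^{h_s}\Delta(\sigma)^{-1}\psi_0^2(\sigma)\,d\sigma=1$; recall also from (S3) that $\psi_0>0$ on all of $[0,h_s]$.

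First I would multiply the equation \eqref{5.2-18} (with $\psi=\psi_0$, $\lambda=\lambda_0$) by $\psi_0$ and integrate over $[0,h_s]$, then integrate $\int_0^{h_s}\psi_0''\psi_0\,dx$ by parts. The boundary terms $[\psi_0'\psi_0]_0^{h_s}$ are evaluated using the boundary conditions \eqref{5.2-19} and \eqref{5.2-20}: at the right end $\psi_0'(h_s)=\lambda_0\psi_0(h_s)$, and at the left end $\psi_0'(0)=\frac{k}{D(k^{-1})}\psi_0(0)$. Together with the normalisation \eqref{5.2-21.5}, this yields
\[
\lambda_0\left(\psi_0(h_s)^2+\varepsilon\right)=\int_0^{h_s}\left(\psi_0'(x)\right)^2\,dx+\frac{k}{D(k^{-1})}\,\psi_0(0)^2 .
\]
To finish, observe that the coefficient $\psi_0(h_s)^2+\varepsilon$ of $\lambda_0$ on the left is at least $\varepsilon>0$, while the right-hand side is strictly positive: the integral term is non-negative, and the boundary term is strictly positive because $k>1$ (Corollary~\ref{coro7}, or Theorem~\ref{theorem9}), $D(k^{-1})\geq D_m>0$ by (D2), and $\psi_0(0)>0$. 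Dividing through gives $\lambda_0>0$. Equivalently, one may argue by contradiction: if $\lambda_0\le 0$ the left side would be $\le 0$ while the right side is $>0$.

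I do not expect a genuine obstacle here; the only points needing care are the signs in the integration by parts and the observation that, because condition \eqref{5.2-20} at $x=h_s$ is itself $\lambda$-dependent, $\lambda_0$ appears on both sides of the energy identity — this is harmless since its coefficient on the left, $\psi_0(h_s)^2+\varepsilon$, is manifestly positive (bounded below by $\varepsilon$ via the normalisation), so it may simply be divided out. The same computation, using $\psi_r(0)>0$ from (S2), in fact shows that every eigenvalue of [S-L] is positive, consistent with $\lambda_0$ being the smallest.
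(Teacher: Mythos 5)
Your proposal is correct and follows essentially the same route as the paper: multiply \eqref{5.2-18} by $\psi_0$, integrate by parts, use \eqref{5.2-19}, \eqref{5.2-20} and the normalisation \eqref{5.2-21.5} to arrive at the identity $\lambda_0\left(\psi_0^2(h_s)+\varepsilon\right)=\int_0^{h_s}\left(\psi_0'(x)\right)^2dx+\frac{k}{D(k^{-1})}\psi_0^2(0)$, and conclude $\lambda_0>0$ from (D2) and (S2). The paper's argument is the same energy computation, so no changes are needed.
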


\begin{proof}
In [S-L] we will set $\lambda=\lambda_0$ and $\psi(x)=\psi_0(x)$, so that,
\begin{align}
\label{5.2-22}
& \psi''_0+\frac{\varepsilon}{\Delta(x)} \lambda_0 \psi_0 = 0; && 0<x<h_s, 
\\
\label{5.2-23}
& \psi'_0 = \frac{k}{D(k^{-1})}\psi_0; && x = 0, 
\\
\label{5.2-24}
& \psi'_0 = \lambda_0 \psi_0; && x =h_s.
\end{align}
We first multiply both sides of \eqref{5.2-22} by $\psi_0$ to obtain, after an integration,
\begin{equation}\label{5.2-27}
[\psi_0(x)\psi'_0(x)]_0^{h_s}-\int_{0}^{h_s}\left(\psi'_0(x)\right)^2dx + \varepsilon\lambda_0=0,
\end{equation}
where use has been made of \eqref{5.2-21.5}. 
With use of \eqref{5.2-23} and \eqref{5.2-24}, we obtain from \eqref{5.2-27}, 
\begin{equation*}\label{5.2-29}
\psi^2_0(h_s) \lambda_0- \frac{k}{D(k^{-1})}\psi^2_0(0)- \int_{0}^{h_s}\left(\psi'_0(x)\right)^2dx + \varepsilon\lambda_0 = 0,
\end{equation*}
and so, after rearranging, 
\begin{equation*}\label{5.2-30}
\lambda_0 = \frac{\left( k D(k^{-1})^{-1}\psi^2_0(0) + \int_{0}^{h_s}\left(\psi'_0(x)\right)^2dx \right)}{\left(\psi^2_0(h_s) + \varepsilon \right)}.
\end{equation*}
Hence, via (D2) and (S2) we have $\lambda_0>0$, as required. 
\end{proof}
\noindent
It is now instructive to consider [S-L] as $\varepsilon \rightarrow 0$. 
We write the eigenvalues of [S-L] as
\begin{equation*}\label{5.2:1}
0<\lambda_0(\varepsilon)<\lambda_1(\varepsilon)<\lambda_2(\varepsilon)<\cdots<\lambda_r(\varepsilon)<\cdots
\end{equation*}
where $\lambda_r(\varepsilon) \rightarrow \infty$ as $r \rightarrow \infty$. 
There are two possibilities as $\varepsilon \rightarrow 0$:
\begin{equation*}
\textrm{(i)}\ \lambda=O(1), \text{ or, } \textrm{(ii)}\ \lambda=O(\varepsilon^{-1}).
\end{equation*}
We first consider case (i). 
Hence we introduce the expansions,
\begin{align}
\label{5.2:2}
& \psi(x,\varepsilon) = \chi_0(x)+\varepsilon\chi_1(x)+O(\varepsilon^2),\quad x\in [0,h_s],
\\
\label{5.2:3}
& \lambda(\varepsilon)=l_0+\varepsilon l_1+O(\varepsilon^2),
\end{align}
as $\varepsilon \rightarrow 0$. 
After substituting from \eqref{5.2:2} and \eqref{5.2:3} into [S-L], we obtain at leading order,
\begin{align}
\label{5.2:7}
& \chi_0''=0; && 0<x<h_s, 
\\
\label{5.2:8}
& \chi_0' = \frac{k\chi_0}{D(k^{-1})}; && x =0, 
\\
\label{5.2:9}
& \chi_0' = l_0\chi_0; && x = h_s. 
\end{align}
On integrating \eqref{5.2:7} we obtain, 
\begin{equation}\label{5.2:10}
\chi_0(x)=\alpha x + \beta,
\end{equation}
where $\alpha, \beta \in \mathbb{R}$ are constants of integration. 
Applying condition \eqref{5.2:8} we have,
\begin{equation*}\label{5.2:11}
\alpha - \frac{k}{D(k^{-1})}\beta = 0,
\end{equation*}
and \eqref{5.2:10} becomes,
\begin{equation}\label{5.2:11.5}
\chi_0(x) = \alpha\left(x+\frac{D(k^{-1})}{k}\right) \quad \forall \  \hspace{1mm} 0 \leq x \leq h_s.
\end{equation}
Finally applying condition \eqref{5.2:9} requires,
\begin{equation*}
\left( l_0 \left( h_s+\frac{D(k^{-1})}{k} \right) -1 \right) \alpha = 0,
\end{equation*}
and so, for a non-trivial solution ($\alpha \neq 0$) we have,
\begin{equation}\label{5.2:14}
l_0 = \frac{k}{\left( D(k^{-1})+kh_s\right)}.
\end{equation} 
The constant $\alpha$ is now fixed via the normalization condition \eqref{5.2-21.5} as 
\begin{equation}\label{5.2:18}
\alpha=\left(\int_0^{h_s}\frac{1}{\Delta(s)}\left(s+\frac{D(k^{-1})}{k} \right)^2ds\right)^{-\frac{1}{2}}\hspace{-0.5mm}.
\end{equation}
We observe from \eqref{5.2:11.5} and (D2) that $\chi_0(x)>0$ for all $x\in [0,h_s]$. 
Thus in \eqref{5.2:2} and \eqref{5.2:3} it follows from (S3) that $\psi=\psi_0$ and $\lambda=\lambda_0$, and so we have constructed the lowest eigenvalue and eigenfunction only in case (i). 
It follows from (S1) that all higher eigenvalues will fall into case (ii), and need not be considered further. 
In summary, we have, from \eqref{5.2:3} and \eqref{5.2:14}, that,
\begin{equation}\label{5.2:36}
\lambda_0(\varepsilon) = \frac{k}{(D(k^{-1}) +kh_s)} + O(\varepsilon ),
\end{equation}
as $\varepsilon\rightarrow 0$, whilst from \eqref{5.2:2} and \eqref{5.2:11.5}, we have
\begin{equation}\label{5.2:37.5}
\psi_0(x,\varepsilon)=\alpha\left(x+\frac{D(k^{-1})}{k}\right)+O(\varepsilon),
\end{equation} 
as $\varepsilon \rightarrow 0$ uniformly for $x \in [0,h_s]$, with the positive constant $\alpha$ given by \eqref{5.2:18}. 
We observe that \eqref{5.2:36} gives,
\begin{equation}\label{5.2:38}
\lambda_0(\varepsilon)\sim \frac{k}{\left(D(k^{-1})+(k-1)\left<D\left(\left[k^{-1},1\right]\right)\right>\right)},
\end{equation}
as $\varepsilon \rightarrow 0$, where $\left<D\left(\left[k^{-1},1\right]\right)\right>$ is as defined in \eqref{S4:eq3.5}. 
Equation \eqref{5.2:38} highlights the contribution of the variable diffusivity across the steady state layer and in particular the mean of this diffusivity together with the diffusivity closest to the coolant. 
Finally, returning to \eqref{5.2-2} and \eqref{5.2-2.5} via \eqref{5.2-10.5}, \eqref{5.2-11} and \eqref{5.2-16} (with (S1)-(S3)) we have,
\begin{align}
\label{DNE4.xxxa}
u(x,t) & = u_s(x) + u_\infty \frac{\psi_0(x)}{D(u_s(x))} e^{-\lambda_0 t} + o(e^{-\lambda_0 t}) \quad \forall \  \hspace{1mm} 0 \leq x \leq h_s,
\\
\label{DNE4.xxxb}
 h(t) & = h_s - u_\infty \psi_0(h_s) e^{-\lambda_0 t} + o(e^{-\lambda_0 t}),
\end{align}
as $t \rightarrow \infty$, with $u_\infty$ being a positive (via Theorem \ref{theorem9} (vi) and (vii)) global constant, which remains undetermined in this large-$t$ analysis. 
The structure of $\lambda_0$ and $\psi_0(x)$ as $\varepsilon \rightarrow 0$ is given by \eqref{5.2:36}and \eqref{5.2:37.5}. 
The steady state solution is approached through terms exponentially small in $t$, with exponent $\lambda_0$, as $t\to \infty$.

\section{Asymptotic Solution to [IBVP] as $\mathbf{\varepsilon} \mathbf{\rightarrow} \mathbf{0}$} \label{kasymptotics}
\noindent
In many applications the parameter $\varepsilon$ is small (see, for example \cite{KL1} and \cite{Needham2}). 
Therefore, it is of value to consider [IBVP] as a parameter perturbation problem with $0<\varepsilon \ll 1$, and consider it's asymptotic solution as $\varepsilon \rightarrow 0$, with $k>1$. 
We expand the solution to [IBVP] in the form,
\begin{align}
\label{eq6:1}
u(x,t) & = u_0(x,t)+\varepsilon u_1(x,t)+O(\varepsilon^2), \\
\label{eq6:2}
h(t) & = h_0(t)+\varepsilon h_1(t)+O(\varepsilon^2),
\end{align}
as $\varepsilon\rightarrow 0$ with $x,t=O(1)$. 
On substituting from \eqref{eq6:1} and \eqref{eq6:2} into [IBVP] we obtain the following problem at leading order for $u_0(x,t)$ and $h_0(t)$, namely,
\begin{align}
\label{eq6:3}
& (F(u_0))_{xx} =0; && 0<x<h_0(t), \quad t>0,
\\
\label{eq6:4}
& (F(u_0))_x=ku_0; && x=0, \quad t>0,
\\
\label{eq6:5}
& u_0=1; && x=h_0(t), \quad t>0,
\\
\label{eq6:6}
& 1+ (h_{0})_t=(F(u_0))_x; && x=h_0(t), \quad t>0,
\\
\label{eq6:7}
& u_0 \rightarrow 1 \text{ as } t \rightarrow 0^+ \text{ uniformly for }  &&  0 \leq x \leq h_0(t),
\\
\label{eq6:8}
& h_0 \rightarrow 0 \text{ as } t\rightarrow 0^+.
\end{align} 
where $F:\mathbb{R}\to\mathbb{R}$ is given by \eqref{eq:s4definitionF}.
An integration of \eqref{eq6:3} gives,
\begin{equation}\label{eq6:15}
F(u_0(x,t))=A(t)x+B(t);\quad 0 \leq x \leq h_0(t),\quad t>0, 
\end{equation}
where $A(t)$ and $B(t)$ are smooth functions of $t$ to be determined. 
Applying condition \eqref{eq6:4} we require,
\begin{equation}\label{eq6:17}
A(t) = kF^{-1}(B(t)) ; \quad t>0.
\end{equation}
Therefore, after rearranging \eqref{eq6:17}, we can rewrite \eqref{eq6:15} as,
\begin{equation}\label{eq6:18}
F(u_0(x,t)) = A(t)x+F\left( k^{-1} A(t)\right);\quad 0\leq x \leq h_0(t),\quad t>0.
\end{equation}
We next apply condition \eqref{eq6:5} to \eqref{eq6:18} to obtain 
\begin{equation}
\label{DNE5:18}
h_0(t) = G(A(t)); \quad t>0.
\end{equation}
where $G:\mathbb{R}^+\to\mathbb{R}$ is such that 
\begin{equation}
\label{DNE5:19}
G(\lambda ) = \frac{F(1)-F(k^{-1}\lambda )}{\lambda}\quad \forall \ \hspace{1mm} \lambda\in\mathbb{R}^+ .
\end{equation}
It is readily established that the following properties are satisfied:
\begin{enumerate}
\item[(G1)] $G\in C^3(\mathbb{R}^+)$.
\item[(G2)] $G(k)=0$.
\item[(G3)] $G(\lambda )>0$ for all $\lambda\in (0,k)$.
\item[(G4)] $G'(\lambda )<0$ for all $\lambda \in (0,k]$.
\end{enumerate}
Finally, we must apply condition \eqref{eq6:6}, which gives,
\begin{equation}
\label{DNE5:20}
h_{0\hspace{1pt}t}(t) = A(t) - 1, \quad t>0.
\end{equation}
We observe, from Theorem \ref{theorem9} (v) and \eqref{DNE5:20}, that (recalling $k>1$)
\begin{equation}
\label{DNE5:21}
1 \leq A(t) \leq k \quad \forall \  t>0 . 
\end{equation} 
Thus, via (G4), \eqref{DNE5:21} and \eqref{DNE5:18}, we may deduce that,
\begin{equation}
\label{DEN5:22}
0 \leq h_0(t) \leq h_s(k) \quad \forall \  t>0,
\end{equation}
(which is in agreement with Proposition \ref{prop8}, recalling that $h_s(k) = F(1) - F(k^{-1})$) after which, via (G4), we may invert \eqref{DNE5:18} to obtain, 
\begin{equation}
\label{DEN5:23}
A(t) = G^{-1}(h_0(t)) , \quad t>0.
\end{equation}
Here $G^{-1}:[0,h_s(k)]\to [1,k]$ is such that $G^{-1}\in C^3([0,h_s(k)])$, and,
\begin{equation}
\label{DNE5:24}
G^{-1}(0)=k , \quad G^{-1} (h_s(k))=1  
\end{equation}
with
\begin{equation}
\label{DNE5:25}
G_\lambda^{-1}(\lambda ) < 0 \quad \forall \  \lambda \in [0,h_s(k)].
\end{equation}
Therefore, via \eqref{DNE5:20} and \eqref{DEN5:23} (together with the condition \eqref{eq6:8}) we require that $h_0:[0,\infty )\to\mathbb{R}$ satisfies the autonomous 1-dimensional dynamical system,
\begin{align}
\label{DNE5:26}
& h_{0\hspace{1pt}t} = G^{-1} (h_0)-1,\quad t>0, 
\\
\label{DNE5:27}
& h_0(t)\to 0^+ \text{ as } t\to 0^+.
\end{align}
This problem has a unique solution, say,
\begin{equation*}
h_0 = H(t),\quad t\geq 0, 
\end{equation*}
with $H\in C^3( [0,\infty ))$, and,
\begin{enumerate}
\item[(H1)] $H(0)=0$,
\item[(H2)] $H'(t) >0$ for all $t\in [0,\infty )$,
\item[(H3)] $H(t) = (k-1)t + O(t^2)$ as $t\to 0^+$,
\item[(H4)] $H(t)\sim h_s(k) - A_\infty (k) e^{-{\mu (k)t}}$ as $t\to\infty$.
\end{enumerate}
Here $A_\infty (k) >0$ is a global constant, and
\begin{equation}
\label{DNE5:28}
\mu (k) = \frac{k}{kh_s(k)+F'(k^{-1})} .
\end{equation}
We observe that (H4) and \eqref{DNE5:28} are in agreement with \eqref{DNE4.xxxb}. 
An implicit form for $H(t)$ is obtained from \eqref{DNE5:26} and \eqref{DNE5:27} as
\begin{equation}
\label{DEN5:29}
\int_{0}^{H(t)} \frac{d\lambda}{(G^{-1} (\lambda)-\lambda )} = t, \quad \forall \  t \geq 0.
\end{equation}
Finally, having determined $h_0(t)$, we obtain from \eqref{eq6:18},
\begin{equation}
\label{DNE5:30}
u_0(x,t) = F^{-1}(A(t)x + F(k^{-1}A(t)));\quad 0 \leq x \leq h_0(t),\quad t\geq 0 ,
\end{equation}
with $A(t)$ given in terms of $h_0(t)$ in \eqref{DEN5:23}. 
It is worth observing from \eqref{DNE5:30} that, 
\begin{equation*}
u_0(0,t) = k^{-1}A(t) = k^{-1} G^{-1}(h_0(t)), \quad t\geq 0, 
\end{equation*}
which, via \eqref{DNE5:24} and \eqref{DNE5:25}, is monotonic decreasing from unity to $k^{-1}$ with increasing $t$.

\section{Numerical Solution to [IBVP]} \label{numerics}

\noindent In this section we consider numerical solutions to [IBVP] for comparison with the theory of the previous sections.
For [IBVP] with constant diffusivity, the method of fundamental solutions was employed in \cite{Needham} to provide numerical approximation of the solution to [IBVP]{\footnote{The MATLAB files used to perform numerical simulations can be found \href{https://github.com/JCMUoB/The-Development-of-a-Wax-Layer-on-the-Interior-Wall-of-a-Circular-Pipe/tree/JCMUoB-Pre-print-\%2B-submitted-manuscript-version}{here}.}}. 
A useful feature of this method is that a node can be placed at $(x,t)=(0,0)$ in the domain to encapsulate the initial-boundary conditions \eqref{2NDBC4} and \eqref{2NDBC5}. 
However, for non-constant diffusivity, the partial differential equation \eqref{2NDPDE} is quasi-linear, and consequently, the nonlinearity precludes numerical methods based on fundamental solutions. 
Hence, in the current situation, we employ an explicit finite-difference method to provide numerical approximations to [IBVP]. 
We note that although this method is simple to apply, in this setting there are several limitations, primarily due to the representation of conditions \eqref{2NDBC4} and \eqref{2NDBC5}.
We first transform [IBVP] to a rectangular domain, by introducing
\begin{equation}
\label{JMNM1}
u(x,t)=v(X,t) , \quad X=\frac{x}{h(t)} \quad \forall \ (x,t)\in D_\infty \cup \partial D_\infty. 
\end{equation}
It then follows from \eqref{2NDPDE}-\eqref{2NDBC3} that [IBVP] becomes
\begin{align}
\label{JMNM2}
& \varepsilon h^2 v_t = D(v)v_{XX} + \varepsilon X h_t h v_X + D'(v)v_X^2; && 0 <X<1, \quad t>0, 
\\
\label{JMNM3}
& D(v)v_X=kvh; && X=0, \quad t >0,
\\   
\label{JMNM4}
& v=1; && X =1, \quad t >0,
\\
\label{JMNM5}
& hh_t = v_X - h \geq 0; && X = 1, \quad t >0.
\end{align}
whilst conditions \eqref{2NDBC4} and \eqref{2NDBC5} are extended, via \eqref{5.1-44} and \eqref{JMNME1}-\eqref{JMNME2} to
\begin{align}
\label{JMNM6}
h(t) & \sim (k-1)t - \frac{k}{2}(k-1)\left( \varepsilon (k-1) +k \right)t^2 \text{ as }t\to 0^+ 
\\
\label{JMNM7}
v(X,t) & \sim 1 + kh(t)(X-1) - \frac{k}{2}h^2(t)(\varepsilon (k-1) - D'(1)k)(X-1)^2 \text{ as }t\to 0^+ ,
\end{align} 
for $X\in [0,1]$. 
Due to the degeneracy of \eqref{JMNM2} at $t=0$, we set the initial conditions for the numerical method at $t=\delta>0$, with $\delta$ sufficiently small so that we can use the asymptotic forms for $h$ and $v$ in \eqref{JMNM6} and \eqref{JMNM7} at $t=\delta$, respectively.
We refer to the initial-boundary value problem given by \eqref{JMNM2}-\eqref{JMNM7} as [IBVP$^*$].

To implement the finite-difference method, we employed a uniform spatial grid with $N_x$ grid points to represent the interval $[0,1]$ so that the $i^{\text{th}}$ spatial grid-points $X_i = (i-1)dX$ with $dX=1/(N_x-1)$. 
The temporal grid points $t_j$, used to represent $[\delta , T]$, were not uniformly spaced, with the time step $dt$ chosen sufficiently small at each step to accommodate numerical stability conditions on the discrete evolution equations for $v_{i,j}\approx v(X_i,t_j)$ and $h_{j}\approx h(t_j)$, namely
\begin{equation*}
dt \ll \min\left\{ \frac{(hdX)^2\varepsilon}{2\sup_{X\in[0,1]}{D(v(X,t))}} ,\ \frac{2(hdX)^2 \varepsilon}{\sup_{X\in[0,1]}{D'(v(X,t))}} ,\ \frac{hdX}{(1-k)} \right\} .
\end{equation*}
We note that this local stability condition limiting $dt$ relaxes, as $h$ increases.
The discretization of \eqref{JMNM5}, \eqref{JMNM4}, \eqref{JMNM2} and \eqref{JMNM3} respectively, in the order that $h_{j+1}$ and $v_{i,j+1}$ are computed, is given by:
\begin{align*}
& v_{N_x,j} = 1 ; \\ 
& h_{j+1} = h_j + dt\left( \frac{v_{N_x,j} - v_{N_{x}-1,j}}{dx h_j}-1\right) ; \\ 
& v_{i,j+1} = v_{i,j} + \frac{dt}{\varepsilon h_j^2} \Bigg( D(v_{i,j})\left( \frac{ v_{i-1,j}-2v_{i,j}+ v_{i+1,j}}{dx^2} \right) \\
& \hspace{2cm} + \varepsilon X_i h_j \left( \frac{h_{j+1}-h_j}{dt}\right) \left( \frac{v_{i+1,j}-v_{i-1,j}}{2dx} \right) \\
& \hspace{2cm} + D'(v_{i,j}) \left( \frac{v_{i+1,j}-v_{i-1,j}}{2dx} \right)^2 \Bigg) \quad i=2 , \ldots , N_x -1; \\
& v_{1,j+1} = v_{2,j+1} - \frac{kdx h_{j+1} v_{2,j+1}}{D(v_{2,j+1})}.
\end{align*} 
\begin{figure}[h!]
\begin{subfigure}{.33\textwidth}
  \centering
  \includegraphics[width=\linewidth]{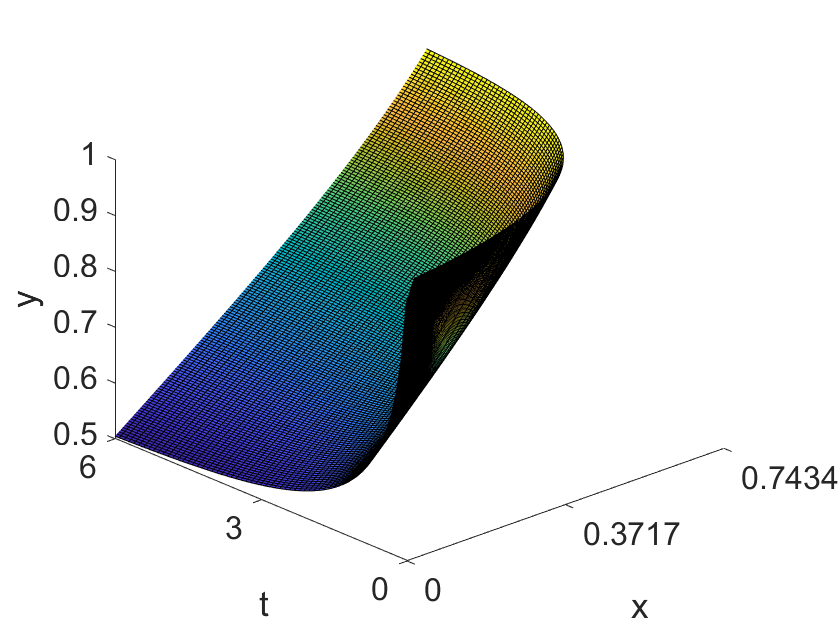}
  \caption{}
  \label{fig:1a}
\end{subfigure}%
\begin{subfigure}{.33\textwidth}
  \centering
  \includegraphics[width=\linewidth]{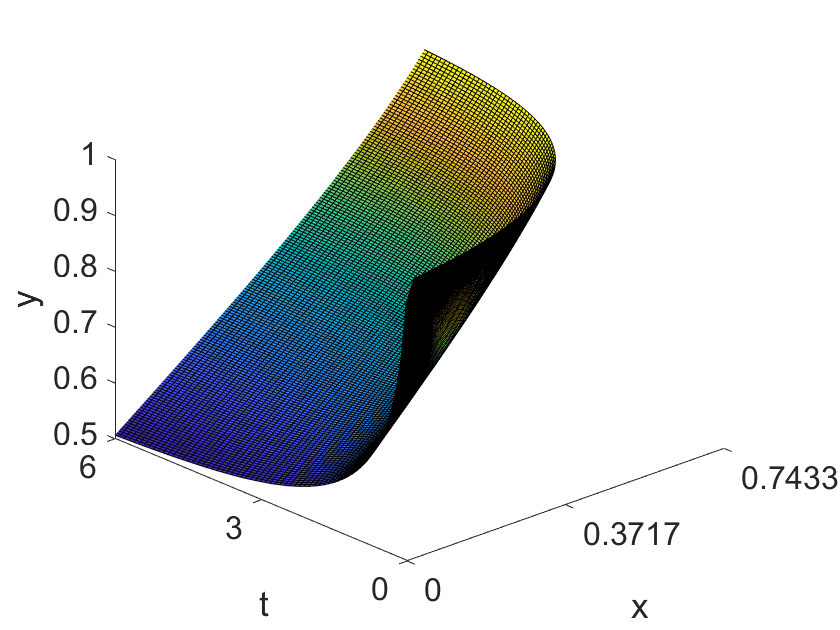}
  \caption{}
  \label{fig:1b}
\end{subfigure}
\begin{subfigure}{.33\textwidth}
  \centering
  \includegraphics[width=\linewidth]{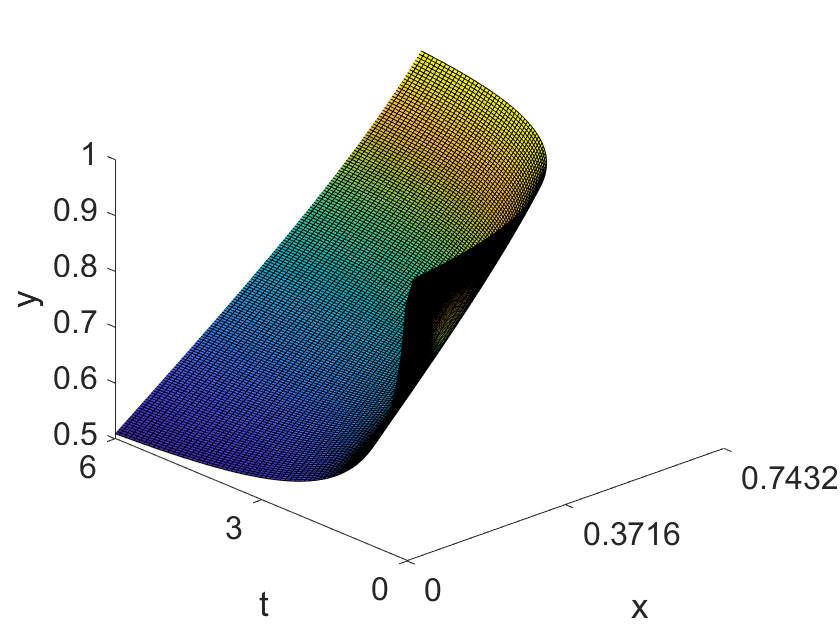}
  \caption{}
  \label{fig:1c}
\end{subfigure}
\\
\begin{subfigure}{.33\textwidth}
  \centering
  \includegraphics[width=\linewidth]{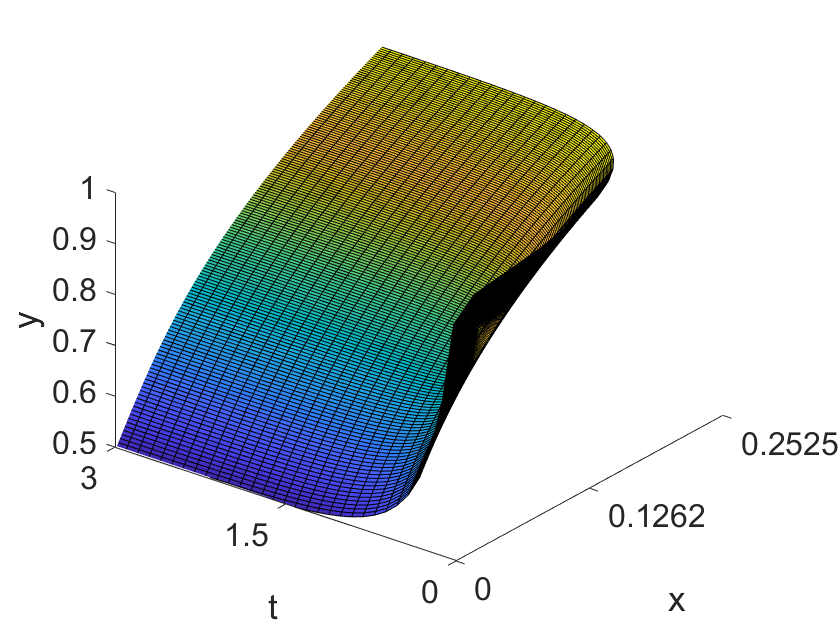}
  \caption{}
  \label{fig:1d}
\end{subfigure}%
\begin{subfigure}{.33\textwidth}
  \centering
  \includegraphics[width=\linewidth]{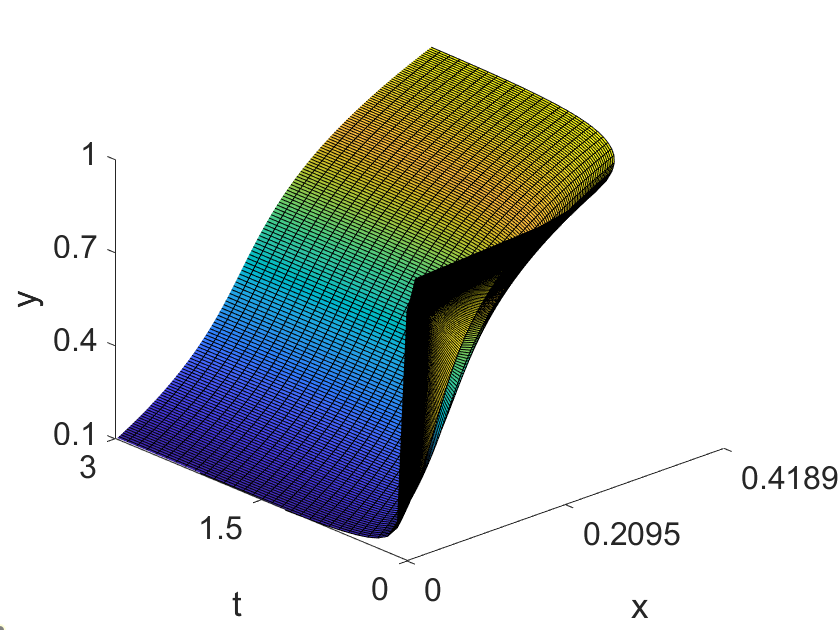}
  \caption{}
  \label{fig:1e}
\end{subfigure}
\begin{subfigure}{.33\textwidth}
  \centering
  \includegraphics[width=\linewidth]{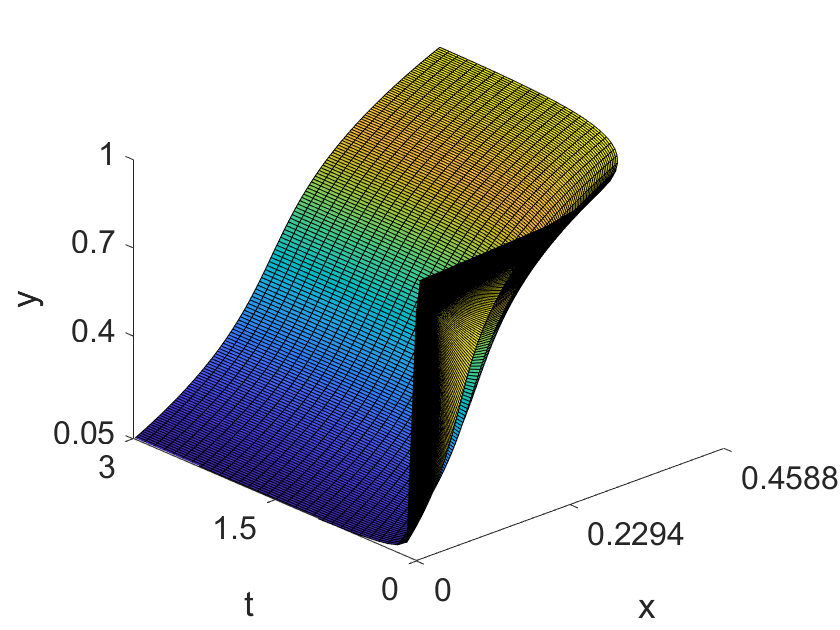}
  \caption{}
  \label{fig:1f}
\end{subfigure}
\caption{
Plots of $y=u(x,t)$ for $0 \leq x \leq h(t)$ with $u$ and $h$ being the numerical solution to [IBVP], obtained from $v_{i,j}$ and $h_j$.
In (a)-(c), we illustrate surface plots for: $D(u)=1+3u(1-u)$ and $k=2$, with $\varepsilon = 0.1$, $0.5$ and $1$ from left to right respectively.
In (d)-(f), we illustrate surface plots for: $D(u)=1-3u(1-u)$ and $\varepsilon = 0.5$, with $k = 2$, $10$ and $20$ from left to right respectively.
}
\label{fig:1}
\end{figure}
Using this numerical scheme we approximated the solution to [IBVP$^*$] in the cases when $D(v) = 1 + cv(1-v)$, with $c=3$, $2$, $1$, $0$, $-1$, $-2$ and $-3$.
The parameter $k(>1)$ and $\varepsilon(>0)$ were chosen from $k=2$, $10$ and $20$ with $\varepsilon = 0.1$, $0.5$ and $1$.
For the discretization we found that $N_x=161$ was sufficient to achieve good accuracy except for six cases, where to maintain a suitable level of accuracy, we used $N_x=201$.  
Numerical approximations to [IBVP$^*$] were obtained on domains which extended up to $t=20$, and in all cases, monotone convergence to the steady state solution was observed.  
Moreover, in all cases, the discrepancy between the numerical approximations for $v$ and $h$ evaluated at the final time level, with their respective steady state solutions, was less than $0.01$.
The 63 simulations took approximately 8 hours to run on a standard laptop with an Intel(R) Core(TM) i5-7Y54 CPU @ 1.20GHz 1.60 GHz processor.
See Summary Data{\footnote{Summary Data from numerical simulations is available \href{https://github.com/JCMUoB/The-Development-of-a-Wax-Layer-on-the-Interior-Wall-of-a-Circular-Pipe/tree/JCMUoB-Pre-print-\%2B-submitted-manuscript-version}{here}.}} for details related to run-time and `numerical convergence' times. 
\begin{figure}
\begin{subfigure}{.5\textwidth}
  \centering
  \includegraphics[width=\linewidth]{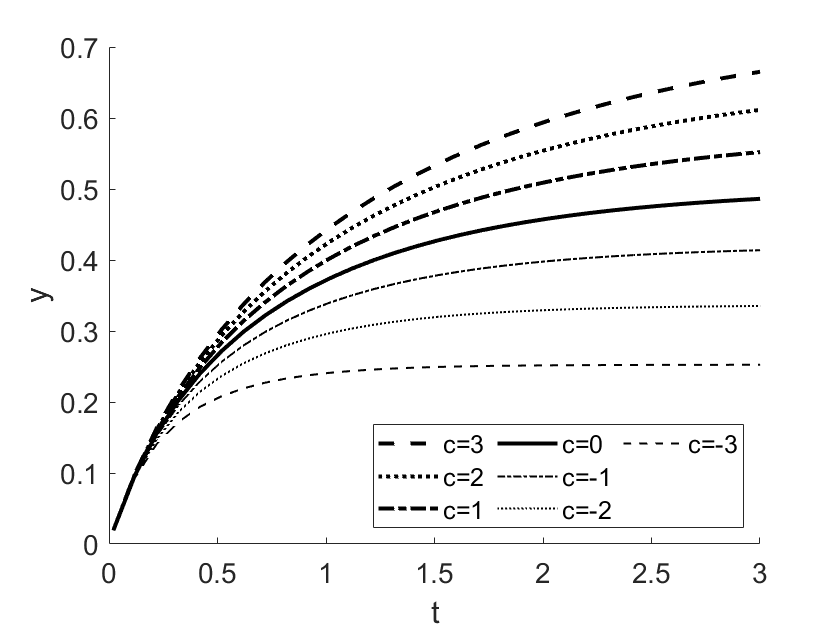}
  \caption{}
  \label{fig:2a}
\end{subfigure}
\begin{subfigure}{.5\textwidth}
  \centering
  \includegraphics[width=\linewidth]{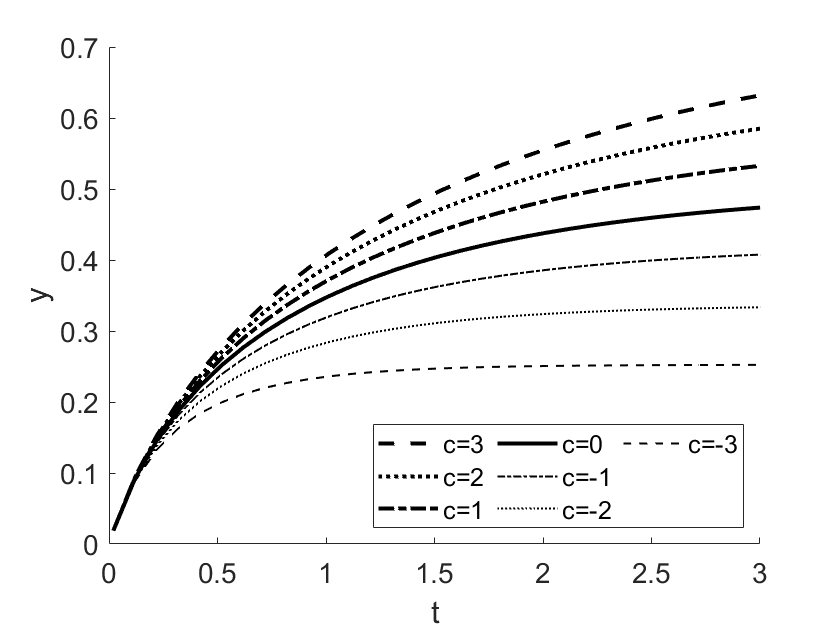}
  \caption{}
  \label{fig:2b}
\end{subfigure}
\\
\begin{subfigure}{.5\textwidth}
  \centering
  \includegraphics[width=\linewidth]{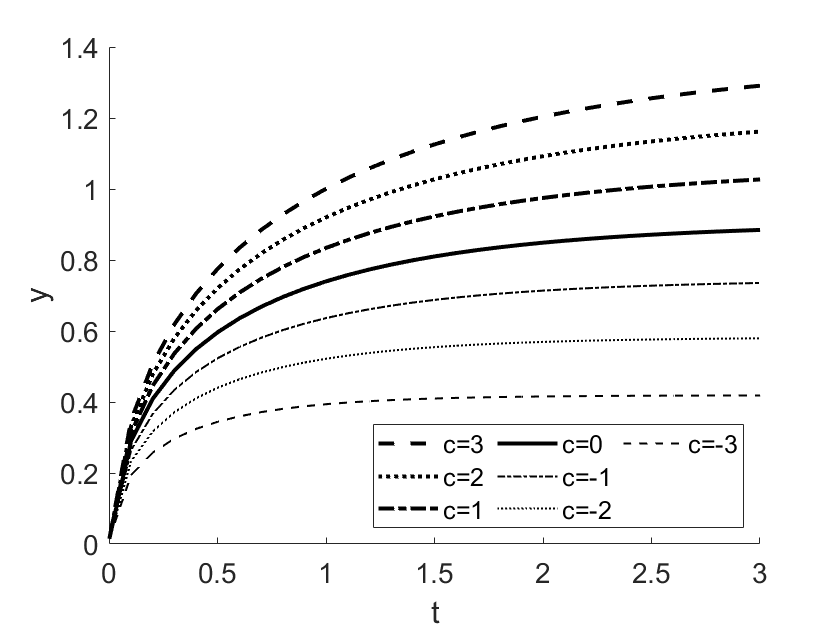}
  \caption{}
  \label{fig:2c}
\end{subfigure}
\begin{subfigure}{.5\textwidth}
  \centering
  \includegraphics[width=\linewidth]{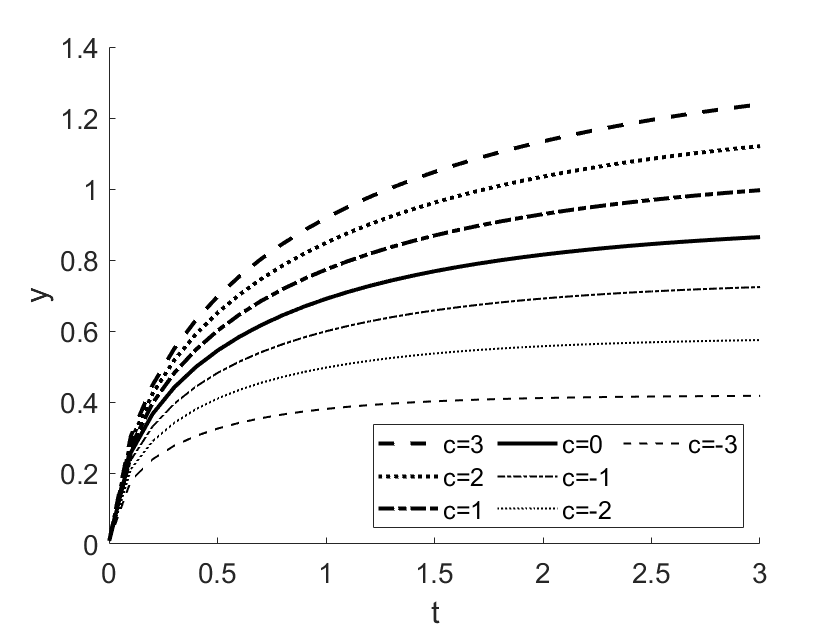}
  \caption{}
  \label{fig:2d}
\end{subfigure}
\caption{
Plots of the free boundary component of the numerical solution to [IBVP$^*$], $y=h(t)$.
Time domains are chosen to highlight the discrepancy in time taken for $h$ to get close to its steady state $h_s$.
In each of (a)-(d) above, we consider [IBVP] with $D(u)=1+cu(1-u)$ with 
(a) $\varepsilon = 0.1$ and $k=2$;
(b) $\varepsilon = 1$ and $k=2$;
(c) $\varepsilon = 0.1$ and $k=10$; and
(d) $\varepsilon = 1$ and $k=10$.
}
\label{fig:2}
\end{figure}
We observe in Figures \ref{fig:1}(a-f) various qualitative features of solutions to [IBVP].
Specifically, in Figures \ref{fig:1}(a-c), we observe that as $\varepsilon$ increases, with $D$ and $k$ fixed, the solutions to [IBVP], converge to the steady state $u_s$ at a slower exponential rate. 
This is further illustrated in Figures \ref{fig:2}(a,b) and Figures \ref{fig:2}(c,d), which graph $h$ against $t$ for each of the cases illustrated in Figures \ref{fig:1}(a-f).
Additionally, in Figures \ref{fig:1}(d-f) we observe that as $k$ increases, with $D$ and $\varepsilon$ fixed, firstly, that the steady state curvature changes from monotone to non-monotone as $k$ increases past $k=2$ and secondly, that as $k$ increases, the time taken for $u$ to approach the steady state $u_s$ decreases.

\section{Comparison with Experiments} \label{thevsexp}

Detailed experiments on the formation of wax deposited layers in straight circular pipes transporting heated oil, when subject to external wall cooling, have been performed and reported by Hoffman and Amundsen \cite{HA1}. 
In this section we make qualitative and trend comparisons with the theory presented in this paper and the experimental results in \cite{HA1}. 
We observe that the experiments reported in \cite{HA1} have fixed paraffinic wax and oil properties in all experiments, while the bulk oil temperature $T_o$, the coolant temperature, $T_c$ and the oil flow Nusselt number $N_u$ are varied in turn, with a series of experiments performed in each case and wax layer evolving profile and equilibrium thickness measured. 
First, we relate the variations in $T_o$, $T_c$ and $N_u$, respectively, to the dimensionless parameters in the model, namely $\varepsilon$ and $k$.
We observe from \eqref{epsilonandk} that, first $k>1$ for wax layer formation to occur, and thereafter,
\begin{enumerate}
\item[(a)] Increasing $T_o$ decreases $k$ whilst keeping $\varepsilon$ fixed.
\item[(b)] Decreasing $T_c$ increases $k$ and $\varepsilon$ together.
\item[(c)] Increasing $N_u$ decreases $k$ whilst leaving $\varepsilon$ fixed.
\end{enumerate}
Thus, via Section \ref{numerics}, we can refer, for comparison, to the evolution of the dimensionless wax layer thickness with dimensionless time in Figures \ref{fig:2}(a-d). 
Figure \ref{fig:2}(a,c) correspond to cases (a) and (c) above. 
Conversely Figures \ref{fig:2}(a,d) correspond to case (b) above. 

We first consider Figure 11 in \cite{HA1}. 
This graphs the experimental wax layer equilibrium height against the wall temperature $T_c$, at two different values of $N_u$. 
Each graph has a critical value of $T_c$, above which a wax layer does not form. 
This is consistent with the theory, corresponding to the critical value $k=1$. 
As $T_c$ decreases from the critical value, the equilibrium wax layer thickness increases; this is born out for the theory Figures \ref{fig:2}(a,c), where Figure \ref{fig:2}(c) has larger $k$ and $\varepsilon$ values than Figure \ref{fig:2}(a). 

We now consider Figure 15 in \cite{HA1}, which graphs the evolution of the wax layer thickness with time, for a number of increasing oil temperatures $T_o$. 
These profiles show remarkably similar structure to those theoretical profiles in Figures \ref{fig:2}(a-d). 
The experimental graphs show a lowering of the profile with increasing $T_o$. 
These can be compared with Figures \ref{fig:2}(a,c), which show lowering profiles with decreasing $k$. 
This is consistent via (a). 
Note that the experiments show that a wax layer will not form for $T_o$ sufficiently large. 
This corresponds to $k$ decreasing to $k=1$ in the theory. 

Finally we consider Figure 18 in \cite{HA1}. 
This shows the evolution of the wax layer thickness with time, for a number of Nusselt numbers, with $N_u$ increasing. 
We see that the wax layer profiles are lowering with increasing Nusselt number. 
This corresponds to case (c) in the theory, and we compare with Figures \ref{fig:2}(b,d). 
We see that, with $\varepsilon$ fixed, the corresponding profiles are lowering as $k$ decreases, in line with the experiments. 
To conclude, we note the striking similarity between the experimental profiles in Figure 18 of \cite{HA1} and the model profiles given in Figure \ref{fig:2}.

\section{Discussion}
In this paper we have developed and analysed in detail the simple thermal model for the development of a wax layer on the interior wall of a circular pipe transporting heated oil containing dissolved paraffinic wax, which was introduced in \cite{Needham}. 
This approach is gaining considerable traction compared to the traditional mechanical and material diffusion theories; it is able to describe features associated with wax layer formation which have been absent from, or even contrary to, the outcomes from the mechanical and material diffusion theories. 
This view point is vindicated in a number of recent detailed reviews of the wax layer formation process, see, for example \cite{M1}, \cite{MEH-SK1}, \cite{KL1} and \cite{vdG1}.
The current paper extends the theory developed in \cite{Needham} to allow for the dependence of solid wax thermal conductivity on local temperature, which is a significant feature of solidified paraffinic wax, and which should be included as a principle component in the thermal model. 
This inclusion modifies the free boundary problem formulated in \cite{Needham}, which now becomes, in general, a nonlinear free boundary problem. 
Despite the introduction of this fundamental nonlinearity, we have still been able to develop a comprehensive theory for this improved model. 
The variable conductivity model was developed in Section \ref{Model}, and formulated as a nonlinear free boundary problem [IBVP] in Section \ref{FBP}.
A detailed and rigorous qualitative theory for [IBVP] has been developed in Section \ref{FBP}, whilst the small and large time asymptotic structure to the solution to [IBVP] was given in Section \ref{CoordExp}. 
In Section \ref{kasymptotics} we developed the asymptotic solution to [IBVP] in the situation when the heat conductivity time scale in solid wax is much faster than the time scale associated with the inter-facial formation of solid wax. 
These substantial analytical developments have been supplemented by an efficient and readily implemented numerical scheme for [IBVP] in Section \ref{numerics}. 
Finally, a qualitative comparison between the present theory and the experimental results in \cite{HA1} was briefly considered in Section \ref{thevsexp}.
These initial comparisons are very encouraging for the thermal mechanism model introduced in \cite{Needham} and developed here. 
A more detailed experiment with model comparison to consider quantitative agreement appears to a worthwhile and appropriate endeavour to undertake.

\section*{Acknowledgements}
The authors would like to thank Ruben Schulkes (Statoil Petroleum, AS) for his helpful comments. 

\section*{Conflict of interest}
The authors declare that they have no conflict of interest.

\end{document}